%% LyX 2.2.1 created this file.  For more info, see http://www.lyx.org/.
%% Do not edit unless you really know what you are doing.
\documentclass[oneside,english]{amsart}
\usepackage[T1]{fontenc}
\usepackage[latin9]{inputenc}
\usepackage{babel}
\usepackage{enumitem}
\usepackage{amstext}
\usepackage{amsthm}
\usepackage{amssymb}
\usepackage[unicode=true,pdfusetitle,
 bookmarks=true,bookmarksnumbered=false,bookmarksopen=false,
 breaklinks=false,pdfborder={0 0 1},backref=section,colorlinks=false]
 {hyperref}

\makeatletter
%%%%%%%%%%%%%%%%%%%%%%%%%%%%%% Textclass specific LaTeX commands.
\numberwithin{equation}{section}
\numberwithin{figure}{section}
\theoremstyle{plain}
\newtheorem{thm}{\protect\theoremname}[section]
  \theoremstyle{plain}
  \newtheorem{fact}[thm]{\protect\factname}
  \theoremstyle{remark}
  \newtheorem{rem}[thm]{\protect\remarkname}
  \theoremstyle{plain}
  \newtheorem{cor}[thm]{\protect\corollaryname}
  \theoremstyle{plain}
  \newtheorem{prop}[thm]{\protect\propositionname}
  \theoremstyle{remark}
  \newtheorem*{claim*}{\protect\claimname}
  \theoremstyle{plain}
  \newtheorem{lem}[thm]{\protect\lemmaname}
\theoremstyle{lemma}
\newtheorem{mainlemma}[thm]{\protect\mainlemmaname}
 \newlist{casenv}{enumerate}{4}
 \setlist[casenv]{leftmargin=*,align=left,widest={iiii}}
 \setlist[casenv,1]{label={{\itshape\ \casename} \arabic*.},ref=\arabic*}
 \setlist[casenv,2]{label={{\itshape\ \casename} \roman*.},ref=\roman*}
 \setlist[casenv,3]{label={{\itshape\ \casename\ \alph*.}},ref=\alph*}
 \setlist[casenv,4]{label={{\itshape\ \casename} \arabic*.},ref=\arabic*}
  \theoremstyle{definition}
  \newtheorem{problem}[thm]{\protect\problemname}
  \theoremstyle{definition}
  \newtheorem{defn}[thm]{\protect\definitionname}

%%%%%%%%%%%%%%%%%%%%%%%%%%%%%% User specified LaTeX commands.

\usepackage{amsfonts}
\usepackage{nicefrac}
\usepackage{amscd}
\usepackage{a4wide}
\linespread{1.5}

\usepackage{url}
% Added by lyx2lyx

% \newcommand{\eqnum}{\refstepcounter{equation}\textup{\tagform@{\theequation}}}

\AtBeginDocument{
  
}

\makeatother

  \providecommand{\claimname}{Claim}
  \providecommand{\corollaryname}{Corollary}
  \providecommand{\definitionname}{Definition}
  \providecommand{\factname}{Fact}
  \providecommand{\lemmaname}{Lemma}
  \providecommand{\problemname}{Problem}
  \providecommand{\propositionname}{Proposition}
  \providecommand{\remarkname}{Remark}
 \providecommand{\casename}{Case}
\providecommand{\mainlemmaname}{Main Lemma}
\providecommand{\theoremname}{Theorem}

\begin{document}
\global\long\def\p{\mathbf{p}}
\global\long\def\q{\mathbf{q}}
\global\long\def\C{\mathfrak{C}}
\global\long\def\SS{\mathcal{P}}
 \global\long\def\pr{\operatorname{pr}}
\global\long\def\image{\operatorname{im}}
\global\long\def\otp{\operatorname{otp}}
\global\long\def\dec{\operatorname{dec}}
\global\long\def\suc{\operatorname{suc}}
\global\long\def\pre{\operatorname{pre}}
\global\long\def\qe{\operatorname{qf}}
 \global\long\def\ind{\operatorname{ind}}
\global\long\def\Nind{\operatorname{Nind}}
\global\long\def\lev{\operatorname{lev}}
\global\long\def\Suc{\operatorname{Suc}}
\global\long\def\HNind{\operatorname{HNind}}
\global\long\def\minb{{\lim}}
\global\long\def\concat{\frown}
\global\long\def\cl{\operatorname{cl}}
\global\long\def\tp{\operatorname{tp}}
\global\long\def\id{\operatorname{id}}
\global\long\def\cons{\left(\star\right)}
\global\long\def\qf{\operatorname{qf}}
\global\long\def\ai{\operatorname{ai}}
\global\long\def\dtp{\operatorname{dtp}}
\global\long\def\acl{\operatorname{acl}}
\global\long\def\nb{\operatorname{nb}}
\global\long\def\limb{{\lim}}
\global\long\def\leftexp#1#2{{\vphantom{#2}}^{#1}{#2}}
\global\long\def\intr{\operatorname{interval}}
\global\long\def\atom{\emph{at}}
\global\long\def\I{\mathfrak{I}}
\global\long\def\uf{\operatorname{uf}}
\global\long\def\ded{\operatorname{ded}}
\global\long\def\Ded{\operatorname{Ded}}
\global\long\def\Df{\operatorname{Df}}
\global\long\def\Th{\operatorname{Th}}
\global\long\def\eq{\operatorname{eq}}
\global\long\def\Aut{\operatorname{Aut}}
\global\long\def\ac{ac}
\global\long\def\DfOne{\operatorname{df}_{\operatorname{iso}}}
\global\long\def\modp#1{\pmod#1}
\global\long\def\sequence#1#2{\left\langle #1\,\middle|\,#2\right\rangle }
\global\long\def\set#1#2{\left\{  #1\,\middle|\,#2\right\}  }
\global\long\def\Diag{\operatorname{Diag}}
\global\long\def\Nn{\mathbb{N}}
\global\long\def\mathrela#1{\mathrel{#1}}
\global\long\def\twiddle{\mathord{\sim}}
\global\long\def\mathordi#1{\mathord{#1}}
\global\long\def\Qq{\mathbb{Q}}
\global\long\def\dense{\operatorname{dense}}
\global\long\def\Rr{\mathbb{R}}
 \global\long\def\cof{\operatorname{cof}}
\global\long\def\tr{\operatorname{tr}}
\global\long\def\treeexp#1#2{#1^{\left\langle #2\right\rangle _{\tr}}}
\global\long\def\x{\times}
\global\long\def\forces{\Vdash}
\global\long\def\Vv{\mathbb{V}}
\global\long\def\Uu{\mathbb{U}}
\global\long\def\tauname{\dot{\tau}}
\global\long\def\ScottPsi{\Psi}
\global\long\def\cont{2^{\aleph_{0}}}
\global\long\def\MA#1{{MA}_{#1}}
\global\long\def\rank#1#2{R_{#1}\left(#2\right)}
\global\long\def\cal#1{\mathcal{#1}}

%% Independence 
\def\Ind#1#2{#1\setbox0=\hbox{$#1x$}\kern\wd0\hbox to 0pt{\hss$#1\mid$\hss} \lower.9\ht0\hbox to 0pt{\hss$#1\smile$\hss}\kern\wd0} 
\def\Notind#1#2{#1\setbox0=\hbox{$#1x$}\kern\wd0\hbox to 0pt{\mathchardef \nn="3236\hss$#1\nn$\kern1.4\wd0\hss}\hbox to 0pt{\hss$#1\mid$\hss}\lower.9\ht0 \hbox to 0pt{\hss$#1\smile$\hss}\kern\wd0} 
\def\nind{\mathop{\mathpalette\Notind{}}} 

\global\long\def\ind{\mathop{\mathpalette\Ind{}}}
\global\long\def\dom{\operatorname{Dom}}
 \global\long\def\nind{\mathop{\mathpalette\Notind{}}}
\global\long\def\average#1#2#3{Av_{#3}\left(#1/#2\right)}
\global\long\def\Ff{\mathfrak{F}}
\global\long\def\mx#1{Mx_{#1}}
\global\long\def\maps{\mathfrak{L}}

\global\long\def\Esat{E_{\mbox{sat}}}
\global\long\def\Ebnf{E_{\mbox{rep}}}
\global\long\def\Ecom{E_{\mbox{com}}}
\global\long\def\BtypesA{S_{\Bb}^{x}\left(A\right)}

\global\long\def\init{\trianglelefteq}
\global\long\def\fini{\trianglerighteq}
\global\long\def\Bb{\cal B}
\global\long\def\Lim{\operatorname{Lim}}
\global\long\def\Succ{\operatorname{Succ}}

\global\long\def\SquareClass{\cal M}
\global\long\def\leqstar{\leq_{*}}
\global\long\def\average#1#2#3{Av_{#3}\left(#1/#2\right)}
\global\long\def\cut#1{\mathfrak{#1}}
\global\long\def\NTPT{\text{NTP}_{2}}
\global\long\def\Zz{\mathbb{Z}}
\global\long\def\TPT{\text{TP}_{2}}
\global\long\def\supp{\operatorname{supp}}

\global\long\def\OurSequence{\mathcal{I}}

\title{Some remarks on dp-minimal groups}
\begin{abstract}
We prove that $\omega$-categorical dp-minimal groups are nilpotent-by-finite,
a small step in the general direction of proving this for NIP $\omega$-categorical
groups. We also show that in dp-minimal definably amenable groups,
$f$-generic global types are strongly $f$-generic. 
\end{abstract}

\author{Itay Kaplan, Elad Levi,  and Pierre Simon}

\thanks{The first author would like to thank the Israel Science foundation
for partial support of this research (Grant no. 1533/14). }

\thanks{Partially supported by ValCoMo (ANR-13-BS01-0006).}

\address{Elad Levi\\
The Hebrew University of Jerusalem\\
Einstein Institute of Mathematics \\
Edmond J. Safra Campus, Givat Ram\\
Jerusalem 91904, Israel}

\address{Itay Kaplan \\
The Hebrew University of Jerusalem\\
Einstein Institute of Mathematics \\
Edmond J. Safra Campus, Givat Ram\\
Jerusalem 91904, Israel}

\email{kaplan@math.huji.ac.il}

\urladdr{https://sites.google.com/site/itay80/}

\address{Pierre Simon\\
Institut Camille Jordan\\
Université Claude Bernard - Lyon 1\\
43 boulevard du 11 novembre 1918\\
69622 Villeurbanne Cedex, France}

\email{simon@math.univ-lyon1.fr}

\urladdr{http://www.normalesup.org/\textasciitilde{}simon/}

\subjclass[2010]{03C45, 20A15, 03C60, 03C35 }
\maketitle

\section{Introduction}

Many results in the model theory of algebraic structures have the
form: if $A$ is an algebraic structure with some model theoretic
property, then $A$ satisfies some nice algebraic properties. There
are many examples of such results, e.g., every $\omega$-stable infinite
field is algebraically closed \cite{Macintyre}. Here our algebraic
structure is a group $G$ and the model theoretic property is dp-minimality,
which we define now. 

A (complete, first order) theory $T$ is \emph{dp-minimal} if the
following cannot happen. There are two formulas $\varphi\left(x,y\right)$,
$\psi\left(x,z\right)$ with $x$ a singleton ($y$ and $z$ perhaps
not), and sequences $\sequence{a_{i}}{i<\omega}$ and $\sequence{b_{j}}{j<\omega}$
such that $\left|a_{i}\right|=\left|y\right|$, $\left|b_{j}\right|=\left|z\right|$
for all $i,j<\omega$ and for every $i,j<\omega$ there is some element
$c_{i,j}$ (all in the monster model $\C\models T$) such that for
all $i',j',i,j<\omega$, $\varphi\left(c_{i,j},a_{i'}\right)$ holds
iff $i=i'$ and $\psi\left(c_{i,j},b_{j'}\right)$ holds iff $j=j'$. 

At first this definition might seem arbitrary, so we will give some
motivation. Recall that $T$ is \emph{NIP} (without the independence
property) or \emph{dependent}, if the following cannot happen. There
is a formula $\varphi\left(x,y\right)$ and sequences $\sequence{a_{i}}{i<\omega}$,
$\sequence{b_{s}}{s\subseteq\omega}$ (all in the monster model $\C$
of $T$) such that $\varphi\left(a_{i},b_{s}\right)$ holds iff $i\in s$. 

NIP plays an important role in current research in model theory. For
more on general NIP, see \cite{pierrebook}. 

Strong dependence is a strengthening of NIP, where one assumes not
only that there is no formula $\varphi\left(x,y\right)$ as in the
definition, but moreover, that there are no $\sequence{\varphi_{i}\left(x,y_{i}\right)}{i<\omega}$
and $\sequence{a_{i,j}}{i,j<\omega}$ in $\C$ (where $\left|a_{i,j}\right|=\left|y_{i}\right|$)
such that for every $\eta:\omega\to\omega$, there is some $b_{\eta}\in\C^{\left|x\right|}$
such that $\varphi_{i}\left(b_{\eta},a_{i,j}\right)$ holds iff $\eta\left(i\right)=j$.
(To see that if $T$ is strongly dependent then it is dependent is
a nice exercise in the definitions.) 

Dp-minimality is then a natural subclass of strong dependence, which
was first properly defined and studied in \cite{OnUs1}. 

Dp-minimal theories are in some sense the simplest case of NIP theories,
but still they include all o-minimal and c-minimal theories and the
theory of the $p$-adics (see \cite[Example 4.28]{pierrebook}). This
restrictive yet still interesting assumption about $T$ yields many
conclusions, evident by the amount of research done in the area, sometimes
with the additional assumption of a group or field structure. See
e.g., \cite{MR3091265,MR2822489,Simon-Dp-min,KaplanSimon,MR3343528,Johnson2015,JahnkeSimonWalsberg2015}
to name a few examples. 

This note contains some results (mostly) on dp-minimal groups, contributing
to the general research in the area. 

In Section \ref{sec:-categorical-dp-minimal-groups} we prove that
all dp-minimal $\omega$-categorical groups are nilpotent-by-finite.
In Subsection \ref{subsec:C(A) abelian for finite A} we prove a general
result on NIP groups: there is a finite $A$ with $C\left(A\right)$
abelian. 

In Section \ref{sec:Boundedly-many-global} we prove that in definably
amenable dp-minimal groups, being $f$-generic is the same as being
strongly $f$-generic.

All definitions are given in the appropriate sections.

\subsubsection*{Acknowledgment. }

We would like to thank the anonymous referee for his review. 

\section{\label{sec:-categorical-dp-minimal-groups}$\omega$-categorical
dp-minimal groups}

\subsection{Introduction}

It is well known that stable $\omega$-categorical groups are nilpotent-by-finite
by \cite{MR533805,MR0491151} where in \cite{MR533805} it is proved
that $\omega$-categorical $\omega$-stable groups are abelian-by-finite
(and it is conjectured that this is true for stable $\omega$-categorical
groups as well). In \cite{MR975912} Macpherson proves that $\omega$-categorical
NSOP groups (and in particular simple in the model theoretic sense)
are also nilpotent-by-finite.

Krupinski generalized the stable case in \cite[Theorem 3.4]{MR2898712}
by proving that that every $\omega$-categorical NIP group that has
fsg (finitely satisfiable generics) is nilpotent-by-finite. In \cite{MR3037553}
Krupinski and Dobrowolski extended this result and removed the NIP
hypothesis\footnote{On the face of it, they asked that the group is generically stable.
However, by \cite[Remark 1.8]{MR2898712}, under NIP and $\omega$-categoricity,
a definable group has fsg iff it is generically stable.}. 

In this section we will go in the other direction and remove the assumption
of fsg. However, our proof requires the stronger assumption of dp-minimality
and not just NIP.

\subsection{What we get from $\omega$-categoricity}

We will need the following facts about $\omega$-categorical theories. 

Suppose that $T$ is $\omega$-categorical. 
\begin{enumerate}
\item (Ryll\textendash Nardzewski, see e.g., \cite[Theorem 4.3.1]{TentZiegler})
For all $n<\omega$, there are at most finitely many $\emptyset$-definable
sets in $n$ variables. 
\item If $M\models T$ is saturated (in particular, countable) and $X\subseteq M^{n}$
is invariant under $\Aut\left(M\right)$ then $X$ is $\emptyset$-definable. 
\item By (2), an $\omega$-categorical theory $T$ eliminates $\exists^{\infty}$,
which means that for all $\varphi\left(x,y\right)$ there is some
$n<\omega$ such that for all $a\in M\models T$, $\varphi\left(M,a\right)$
is infinite iff $\left|\varphi\left(M,a\right)\right|\geq n$. 
\end{enumerate}
A structure $M$ is\emph{ $\omega$-categorical }if its theory is.

Suppose that $\left(G,\cdot\right)$ is an $\omega$-categorical
group. Then, it follows easily from (1) that $\left(G,\cdot\right)$
is locally finite (every finitely generated subgroup is finite). 

We will use the following fact about locally finite groups. 
\begin{fact}
\label{fact:locally finite contains infinite abelian-1}\cite[Corollary 2.5]{MR0470081}
If $G$ is an infinite locally finite group (every finitely generated
subgroup is finite), then $G$ contains an infinite abelian subgroup. 
\end{fact}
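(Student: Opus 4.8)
The plan is to prove the statement directly by extracting an infinite abelian subgroup, with the main work split according to the Sylow structure of $G$. First I reduce to the countable case: any countably infinite subgroup of $G$ is again infinite and locally finite, so I may assume $G$ is countable and write $G=\bigcup_{n<\omega}G_{n}$ as a strictly increasing union of finite subgroups.

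The organizing idea is an ascending tower of commuting elements. Suppose that from every infinite locally finite subgroup $H\le G$, and every finite abelian $A=\langle a_{1},\dots,a_{k}\rangle\le H$ for which $\bigcap_{i\le k}C_{H}(a_{i})$ is infinite, I can choose $a_{k+1}\in\bigcap_{i\le k}C_{H}(a_{i})\setminus A$ such that $\bigcap_{i\le k+1}C_{H}(a_{i})$ is again infinite. Iterating then produces pairwise commuting elements $a_{1},a_{2},\dots$ with $a_{k+1}\notin\langle a_{1},\dots,a_{k}\rangle$, so that $\langle a_{1},a_{2},\dots\rangle$ is an infinite abelian subgroup and we are done. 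Hence the only obstruction is the situation in which, inside some infinite locally finite $H\le G$, \emph{every} nontrivial element has a finite centralizer; ruling this out is the crux.

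To eliminate that situation I would pass to Sylow theory for locally finite groups, using that Sylow $p$-subgroups exist, and split into two cases. If some Sylow $p$-subgroup $P\le H$ is infinite, then $P$ is an infinite locally finite $p$-group, and I invoke the classical bound that a finite $p$-group of order $p^{n}$ contains an abelian subgroup of order at least $p^{r}$ for the largest $r$ with $\binom{r}{2}\le n$; applying this inside the finite $p$-subgroups $P\cap G_{n}$ (whose orders tend to infinity) yields finite abelian subgroups of unbounded order, and organizing these into the (finitely branching, since each $G_{n}$ is finite) tree of finite abelian subgroups lets König's lemma produce an infinite abelian subgroup. If instead every Sylow subgroup of $H$ is finite, then infinitely many primes divide the orders of the $G_{n}$, and I would select by a pigeonhole argument across the $G_{n}$ an ascending chain of finite abelian subgroups assembled from commuting elements of distinct primes, whose union is infinite abelian. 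In either case $H$ contains an infinite abelian subgroup, contradicting the assumption that all its centralizers are finite.

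The step I expect to be the genuine obstacle is the passage from \emph{arbitrarily large} finite abelian subgroups to a \emph{single infinite} one: the large abelian subgroups produced in different $G_{n}$ need not be nested, so the real content lies in the coherence argument (the König/inverse-limit extraction in the $p$-group case, and the bookkeeping that keeps the chosen commuting elements of distinct primes compatible in the infinitely-many-primes case).
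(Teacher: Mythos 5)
The paper offers no proof of this Fact: it is quoted verbatim from Kegel--Wehrfritz \cite{MR0470081}, and it is the Hall--Kulatilaka theorem (proved independently by Kargapolov), a genuinely deep result all of whose known proofs pass through the Feit--Thompson odd order theorem. So any short elementary argument should be treated with suspicion, and indeed yours has a fatal gap at exactly the point you flag as ``the genuine obstacle.'' The K\"onig extraction does not work: an infinite branch in your tree of finite abelian subgroups is just an ascending chain $A_{0}\subseteq A_{1}\subseteq\cdots$ with $A_{n}=A_{n+1}\cap G_{n}$, and nothing forces this chain to be strictly increasing --- it can stabilize at a finite (even trivial) group, precisely because a large abelian subgroup of $G_{m}$ may intersect $G_{n}$ trivially. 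Having finite abelian subgroups of unbounded order is a much weaker (and comparatively easy) statement that holds in \emph{every} infinite locally finite group; the entire content of the theorem is the coherence step from ``arbitrarily large finite abelian'' to ``one infinite abelian,'' and your proposal leaves it open. Even the restricted claim you reduce to in Case A --- that an infinite locally finite $p$-group contains an infinite abelian subgroup --- is true but is itself a nontrivial theorem (proved via minimal conditions and Chernikov structure theory), not a consequence of Burnside's $\binom{r}{2}\leq n$ bound plus K\"onig.

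The second case has the same problem in sharper form. By your tower reduction you are working inside a group in which every nontrivial element has finite centralizer; under that standing hypothesis there is simply no reason for elements of distinct prime orders drawn from different $G_{n}$ to commute, and pigeonhole across the $G_{n}$ cannot manufacture commutation --- ruling out such ``all centralizers finite'' groups \emph{is} the theorem, and this is where Feit--Thompson enters the actual proofs (via the existence of involutions and analysis of their centralizers). Your Sylow dichotomy is also unjustified: maximal $p$-subgroups of a locally finite group need not be conjugate or cofinal in any useful sense, and it can a priori happen that every maximal $p$-subgroup is finite while finite $p$-subgroups of a single prime $p$ have unbounded order, so ``all Sylow subgroups finite'' does not yield ``infinitely many primes divide the $|G_{n}|$'' without an additional boundedness argument. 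In short: the reduction to the tower and the identification of the obstruction are fine, but every route you propose around the obstruction fails, and no elementary route is known.
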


\subsection{Equivalent conditions for being nilpotent-by-finite}
\begin{rem}
\label{rem:An elementary property} If $G$ is nilpotent-by-finite,
and $H\equiv G$ then $H$ is nilpotent-by-finite. Why? Suppose that
$\lambda^{+}=2^{\lambda}>\left|G\right|,\left|H\right|$ and let $G^{*}$
be a saturated extension of $G$ of size $\lambda^{+}$, which, we
may assume, also contains $H$. Then it is enough to show that $G^{*}$
is nilpotent-by-finite (being nilpotent-by-finite transfers to subgroups).
Suppose that $G_{0}\leq G$ is nilpotent of finite index. Let $\left(S^{*},S_{0}^{*}\right)$
be a saturated extension of $\left(G,G_{0}\right)$ of size $\lambda$.
Then $S_{0}^{*}\leq S^{*}$ is nilpotent of finite index in $S^{*}$
and $G^{*}\cong S^{*}$.

If there is no such $\lambda$, we can either force its existence
or use special models instead (see \cite[Theorems 10.4.4, 10.4.2]{Hod}).
\end{rem}
Suppose that $G$ is any group. Let $G_{\emptyset}^{00}$ be the intersection
of all $\emptyset$-type-definable subgroups of $G$ (in $\C$). When
$G$ is $\omega$-categorical, it must be $\emptyset$-definable of
finite index (so we can talk about it in $G$ without going to a saturated
extension). However, if $G$ is NIP then, by \cite{Sh876}, $G_{\emptyset}^{00}=G_{A}^{00}$
for any small set $A$.
\begin{fact}
\cite[Theorem 5.2.8]{MR1261639}\label{fact:Fiiting}Let M and N
be normal nilpotent subgroups of a group G, then L = MN is nilpotent
group.
\end{fact}
\begin{cor}
\label{cor:Fitt} Assume $G$ is a nilpotent-by-finite $\omega$-categorical
group with $G=G_{\emptyset}^{00}$, then $G$ is a nilpotent group.

\begin{proof}
By Fact \ref{fact:Fiiting} the product of all normal nilpotent subgroups
of finite index of $G$ is itself a nilpotent group which is also
$\emptyset$-definable (by $\omega$-categoricity) and of finite index,
thus $G$ is nilpotent.
\end{proof}
\end{cor}
\begin{prop}
\label{prop:equivalent conditions for being n-b-f}Suppose that $\cal C$
is a class of countable $\omega$-categorical NIP groups (in the pure
group language) satisfying: if $G\in\cal C$, $H\trianglelefteq G$
definable (over $\emptyset$) then $G/H\in\cal C$ and $H\in\cal C$.
Then the following statements are equivalent:

\begin{enumerate}
\item Every $G\in\cal C$ is nilpotent-by-finite.
\item Every infinite characteristically simple $G\in\cal C$ is abelian.
\item Every infinite $G\in\cal C$ contains an infinite $\emptyset$-definable
abelian subgroup. 
\end{enumerate}
\end{prop}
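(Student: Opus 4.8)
The plan is to prove the cycle $(1)\Rightarrow(2)\Rightarrow(3)\Rightarrow(1)$, after assembling a few structural facts that hold throughout. First, working in the countable saturated model, any \emph{abstract} characteristic subgroup of $G$ is in particular invariant under $\Aut(G)$, hence $\emptyset$-definable by item (2) of the Ryll--Nardzewski facts; so $Z(G)$, $[G,G]$, the terms of the upper and lower central series, and the Fitting subgroup $F(G)$ are all $\emptyset$-definable. Second, $G^{00}$ is $\emptyset$-definable of finite index (as recalled in the Remark), and a \emph{connected} group (one with $G=G^{00}$) has no proper $\emptyset$-definable subgroup of finite index; consequently any finite normal subgroup of a connected $G$ is central, and any $\emptyset$-definable subgroup with only finitely many conjugates is normal. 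Finally I would record the reformulation that $G$ is nilpotent-by-finite iff $G^{00}$ is nilpotent: if $N\trianglelefteq G$ is nilpotent of finite index then $F(G)\supseteq N$ has finite index and, writing $F(G)=N\cdot N_1\cdots N_k$ as a product of finitely many nilpotent normal subgroups, Fitting's theorem gives that $F(G)$ is nilpotent of finite index, so $G^{00}\leq F(G)$ is nilpotent; the converse is immediate.

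For $(1)\Rightarrow(2)$, let $G\in\cal C$ be infinite and characteristically simple. By (1) it is nilpotent-by-finite, so by the above $F(G)$ is a nontrivial (it contains the infinite $N$) characteristic nilpotent subgroup; characteristic simplicity forces $F(G)=G$, so $G$ is nilpotent. Then $Z(G)$ is a nontrivial characteristic subgroup, hence $Z(G)=G$ and $G$ is abelian. This step is pure group theory and should cause no trouble.

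For $(2)\Rightarrow(3)$, given an infinite $G\in\cal C$ I would pass to a \emph{minimal} infinite $\emptyset$-definable $G$-normal subgroup $M$ (available since there are only finitely many $\emptyset$-definable subgroups). Since $M^{00}$ is again $\emptyset$-definable and normal in $G$, minimality gives $M=M^{00}$, i.e. $M$ is connected; hence every finite normal subgroup of $M$ is central. Let $R$ be the largest finite characteristic subgroup of $M$; then $R\leq Z(M)$, and using that characteristic subgroups are $\emptyset$-definable one checks that $M/R\in\cal C$ is infinite and characteristically simple, so by (2) it is abelian. Thus $[M,M]\leq R\leq Z(M)$ is finite and $M$ is nilpotent of class at most $2$. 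If $M$ is already abelian we are done; otherwise the commutator induces a $\emptyset$-definable alternating biadditive pairing $M/Z(M)\times M/Z(M)\to[M,M]$ with finite target, and here I would invoke NIP: a ``wide'' such pairing (one with no infinite $\emptyset$-definable isotropic subgroup) encodes the independence property, so NIP supplies an infinite $\emptyset$-definable isotropic subgroup, which lifts to an infinite $\emptyset$-definable abelian subgroup of $M\leq G$. This is the point where NIP, rather than mere $\omega$-categoricity, is genuinely needed, and extracting the isotropic subgroup cleanly is the delicate part of this implication.

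For $(3)\Rightarrow(1)$, I would induct on the maximal length of a chain of $\emptyset$-definable subgroups of $G$ (finite by Ryll--Nardzewski), reducing at once to $G$ connected by replacing $G$ with $G^{00}$. By (3) there is an infinite $\emptyset$-definable abelian $A\leq G$; since $A$ has finitely many conjugates, $N_G(A)$ has finite index and connectedness forces $A\trianglelefteq G$. The quotient $G/A$ is connected with strictly shorter chain length, so by induction $G/A$ is nilpotent. The main obstacle is now to upgrade this from \emph{solvable-by-finite} to \emph{nilpotent-by-finite}: I must show the conjugation action of the connected group $G$ on $A$ has finite image, so that $C_G(A)=G$ and $A\leq Z(G)$, whence a central extension of a nilpotent group is nilpotent and we are done. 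Equivalently, one shows that the connected component centralizes every minimal abelian section; I expect this to follow from NIP together with $\omega$-categoricity (the definable automorphisms of the finite-exponent $\omega$-categorical group $A$ being too constrained to be moved by a connected group), but pinning down this centralizing step is where the real work of the implication lies, and it is the step I would expect to be hardest to make rigorous.
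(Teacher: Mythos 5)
Your cycle breaks at $(3)\Rightarrow(1)$, and at exactly the point you flag. After the induction you have an infinite $\emptyset$-definable abelian normal $A$ with $G/A$ nilpotent, i.e., only that $G$ is \emph{solvable}. The step you hope for --- that a connected $G$ must act on $A$ with finite image, so that $A\leq Z(G)$ --- has no soft proof: as a statement about abstract groups it is false (in the free nilpotent group of class $2$ and exponent $p$ on infinitely many generators, a maximal abelian normal subgroup $A$ satisfies $C_G(A)=A$ of infinite index, so the conjugation action has infinite image), so any proof must use NIP and $\omega$-categoricity in an essential, nontrivial way, and ``the definable automorphisms of $A$ are too constrained'' is a hope, not an argument. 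What actually closes this gap is the theorem of \cite{MR1426520} invoked by the paper: an $\omega$-categorical solvable group that is not nilpotent-by-finite interprets the infinite atomless Boolean algebra and hence has IP. The paper funnels all the depth of the proposition through this one citation, in its proof of $(2)\Rightarrow(1)$, at the exactly analogous moment ($G_{1}$ abelian, $G/G_{1}$ nilpotent, hence $G$ solvable); your route would need the same input, or a new proof of it, inside $(3)\Rightarrow(1)$. As written, the proposal is therefore missing the key idea of the whole proof.

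The other step you flag, in $(2)\Rightarrow(3)$, is also gapped as stated but is repairable --- though not in the direction you propose. Your commutator pairing on $M/Z(M)$ is \emph{nondegenerate} by the very definition of the centre, and a nondegenerate alternating pairing need not admit any invariant isotropic subgroup at all (think of symplectic transitivity), so ``NIP supplies an infinite $\emptyset$-definable isotropic subgroup'' is the wrong target; what is true is that the nonabelian case is vacuous. Indeed, once $[M,M]\leq R$ is finite, every conjugacy class of $M$ lies in a coset of $[M,M]$ and is finite, so every centralizer $C_{M}(a)$ is a definable subgroup of finite index, hence contains $M^{00}=M$ (this is where NIP enters, via the existence of $M^{00}$); thus $M=Z(M)$ is abelian and no pairing analysis is needed. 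With that repair, your decomposition is genuinely different from the paper's --- the paper proves $(2)\Rightarrow(1)$ by induction on a maximal chain of $\emptyset$-definable subgroups, takes $(3)\Rightarrow(2)$ as immediate, and gets $(1)\Rightarrow(3)$ by a short centre computation, whereas you prove $(2)\Rightarrow(3)$ directly --- and your $(1)\Rightarrow(2)$ via the Fitting subgroup is fine. But since $(3)\Rightarrow(1)$ carries the entire weight and is precisely the missing deep step, the proposal as a whole does not stand.
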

\begin{proof}
(2) implies (1) is essentially Krupinski's argument from \cite{MR2898712}.
Suppose that $G\in\cal C$ and we wish to show that it is nilpotent-by-finite.
We may of course assume that $G$ is infinite. We may assume that
$G^{00}=G$ so that $G$ has no definable subgroups of finite index.

By $\omega$-categoricity, we can write $\left\{ e\right\} =G_{0}\lneq G_{1}\lneq\cdots\lneq G_{n}=G$
where the groups $G_{i}$ are $0$-definable, and this is a maximal
(length-wise) such chain. The groups $G_{i}$ are invariant under
$\Aut\left(G\right)$ so normal, and by assumption $G_{i}\in\cal C$.
The proof is now by induction on $n\geq1$. For $n=1$, this follows
immediately from (2) (i.e., $G_{1}$ will be abelian). 

Now note that by the induction hypothesis if $H\trianglelefteq G$
is $\emptyset$-definable and non-trivial then $G/H$ is nilpotent:
$G/H$ is in $\cal C$ (as $G=G^{00}$, $G/H$ is infinite). Also
$\left(G/H\right)^{00}=\left(G/H\right)$. But the maximal length
of a chain as above which suits $G/H$ must be shorter than $n$.
Hence $G/H$ is nilpotent-by-finite and by Corollary \ref{cor:Fitt}
$G/H$ is itself nilpotent.  

Hence we may assume that $Z\left(G\right)$ is trivial (otherwise
$G/Z\left(G\right)$ is nilpotent and so $G$ is too). This in turn
implies that $G_{1}$ is infinite (if not, then $C_{G}\left(G_{1}\right)$
is of finite index in $G$, and hence equals $G$, but then $G_{1}\subseteq Z\left(G\right)$).
Now we use (2) on $G_{1}$ to finish: $G_{1}$ is abelian and $G/G_{1}$
is nilpotent, so both are solvable, and hence $G$ is solvable. However,
by \cite[Theorem 1.2]{MR1426520}, if $G$ is not nilpotent-by-finite
(equivalently, nilpotent, since we already assumed $G=G^{00}$), it
interprets the infinite atomless boolean algebra, and has IP. 

(3) implies (2) is obvious. 

(1) implies (3). Without loss of generality, $G=G^{00}$. 

\begin{claim*}
Either $Z\left(G\right)$ is infinite or $G/Z\left(G\right)$ is centerless.
\end{claim*}
\begin{proof}
If $x\in G$ with $y^{-1}xy\in xZ\left(G\right)$ for all $y\in G$,
then $C\left(x\right)$ has a finite index in $G$. Hence $C\left(x\right)=G$
so $x\in Z\left(G\right)$. 
\end{proof}
If $Z\left(G\right)$ is infinite, we are done. Otherwise, by the
claim $G/Z(G)$ is centerless, But by (1) and Corollary \ref{cor:Fitt}
, $G$ is nilpotent so we have a contradiction . 
\end{proof}
\begin{rem}
\label{rem:dp-minimal satisfy the conditions}Note that the class
of all (countable) NIP $\omega$-categorical groups satisfy the conditions
in Proposition \ref{prop:equivalent conditions for being n-b-f}.
So does the class of $\omega$-categorical dp-minimal groups (taking
quotients of the universe $M$, as opposed to e.g., $M^{2}$, preserves
dp-minimality). 
\end{rem}

\begin{rem}
\label{rem:Krupinski-method}In \cite{MR2898712}, Krupinski proved
that (2) in Proposition \ref{prop:equivalent conditions for being n-b-f}
holds for the class of NIP $\omega$-categorical groups with fsg.
His proof uses a classification theorem on $\omega$-stable characteristically
simple groups due to Wilson and Apps \cite{MR679175,MR700288} (see
remarks after Problem \ref{prob:What-about-inp-minimal}). By that
theorem and \cite[Proposition 3.2]{MR2898712}, it follows that $\omega$-categorical
characteristically simple groups with NIP are $p$-groups for some
$p$. By the argument in the proof of \cite[Proposition 3.1]{MR2898712},
it follows that for such groups $G$, if $a_{1},\ldots,a_{n}\in G$
then $C\left(a_{1}\right)\cap\cdots\cap C\left(a_{n}\right)$ is infinite. 

However, we will avoid using the classification theorem, and prove
(3) directly for dp-minimal $\omega$-categorical groups.
\end{rem}

\begin{rem}
Note also that Fact \ref{fact:locally finite contains infinite abelian-1}
alone is not enough, even though it may seem so in light of the fact
that if $G$ is both NIP and contains an infinite abelian subgroup,
then $G$ contains an infinite definable abelian subgroup by \cite[Claim 4.3]{Sh783}.
However this subgroup is not necessarily $\emptyset$-definable.
\end{rem}

\subsection{What we get from dp-minimality and NIP}

The only use of dp-minimality in the proof is the following basic
observation.
\begin{fact}
\label{fact:(Pierre)} \cite[Claim in proof of Proposition 4.31]{pierrebook}
If $\left(G,\cdot\right)$ is a dp-minimal group then for every definable
subgroups $H_{1},H_{2}\leq G$ either $\left[H_{1}:H_{1}\cap H_{2}\right]<\infty$
or $\left[H_{2}:\,H_{1}\cap H_{2}\right]<\infty$.
\end{fact}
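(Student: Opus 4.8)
The plan is to prove the contrapositive: assuming that both $[H_1 : H_1 \cap H_2]$ and $[H_2 : H_1 \cap H_2]$ are infinite, I would exhibit an explicit configuration witnessing the failure of dp-minimality, working inside the monster model $\C \models \Th\left(G,\cdot\right)$. Saying that $[H_2 : H_1\cap H_2]$ is infinite just means that $H_1 \cap H_2$ has infinitely many cosets in $H_2$, so I can fix a sequence $\langle a_i \mid i < \omega\rangle$ of elements of $H_2$ lying in pairwise distinct \emph{left} cosets of $H_1 \cap H_2$, and likewise (using $[H_1 : H_1\cap H_2] = \infty$) a sequence $\langle b_j \mid j < \omega\rangle$ of elements of $H_1$ lying in pairwise distinct \emph{right} cosets of $H_1 \cap H_2$.

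Next I would take the two formulas $\varphi\left(x,y\right) := x^{-1}y \in H_1$ and $\psi\left(x,z\right) := z x^{-1} \in H_2$ (both with $x$ a single group variable, matching the singleton requirement in the definition), and set $c_{i,j} := a_i b_j$ for each $i,j < \omega$. The point of this particular mixed arrangement --- a left $H_1$-coset condition tested against a right $H_2$-coset condition --- is that $c_{i,j}$ automatically lies in $a_i H_1$ (because $b_j \in H_1$) and in $H_2 b_j$ (because $a_i \in H_2$). Thus no nonemptiness-of-intersection argument is needed, and the awkward comparison between $H_1 H_2$ and $H_2 H_1$ (which would arise from using left cosets on both sides) is sidestepped entirely.

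It then remains to verify the two biconditionals. For $\varphi$, I would compute $c_{i,j}^{-1} a_{i'} = b_j^{-1} a_i^{-1} a_{i'}$; since $b_j^{-1} \in H_1$, this lies in $H_1$ iff $a_i^{-1} a_{i'} \in H_1$, and as $a_i, a_{i'} \in H_2$ this happens iff $a_i^{-1} a_{i'} \in H_1 \cap H_2$, i.e. iff $i = i'$ by the choice of the $a_i$ in distinct left cosets. The computation for $\psi$ is symmetric: $b_{j'} c_{i,j}^{-1} = b_{j'} b_j^{-1} a_i^{-1}$, and since $a_i^{-1} \in H_2$ this lies in $H_2$ iff $b_{j'} b_j^{-1} \in H_1 \cap H_2$, i.e. iff $j = j'$ by the distinct-right-coset choice of the $b_j$. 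This produces exactly the $\omega \times \omega$ array demanded by the definition of dp-minimality, contradicting the hypothesis; hence one of the two indices must be finite.

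I expect the only genuine subtlety to be the bookkeeping: choosing the correct left-versus-right coset conventions for the two subgroups so that the single product $c_{i,j} = a_i b_j$ simultaneously witnesses the intended $H_1$- and $H_2$-coset memberships, and so that the cancellation in each biconditional isolates precisely one index. Everything else is a routine verification, and no hypothesis beyond dp-minimality itself is used.
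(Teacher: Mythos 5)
Your proposal is correct and is essentially the standard argument for this fact: the paper itself only cites it (it appears as the Claim in the proof of Proposition 4.31 of Simon's book), and the proof there is exactly your mixed-coset construction --- $a_i \in H_2$ in distinct left cosets and $b_j \in H_1$ in distinct right cosets of $H_1 \cap H_2$, with $c_{i,j} = a_i b_j$ producing an ict-pattern of depth $2$ in the single variable $x$. The only cosmetic remark is that the parameters defining $H_1$ and $H_2$ should be absorbed into the variables $y$ and $z$ of $\varphi$ and $\psi$, which your setup accommodates trivially.
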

We will also use the Baldwin-Saxl lemma, which is true for all NIP
groups. 
\begin{fact}
\label{fact:BaldwinSaxl}\cite{BaSxl} Let $\left(G,\cdot\right)$
be NIP. Suppose that $\varphi\left(x,y\right)$ is a formula and that
$\left\{ \varphi\left(x,c\right)\left|\,c\in C\right.\right\} $ defines
a family of subgroups of $G$. Then there is a number $n<\omega$
(depending only on $\varphi$) such that any finite intersection of
groups from this family is already an intersection of $n$ of them.
\end{fact}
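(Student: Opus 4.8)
The plan is to prove this (the Baldwin--Saxl lemma) by the standard argument, extracting the bound $n$ directly from NIP via a shattering construction. Write $H_{c}=\{g\in G:\varphi(g,c)\}$ for $c\in C$, so by hypothesis each $H_{c}$ is a subgroup of $G$. Since $T$ is NIP, the formula $\varphi$ cannot shatter arbitrarily large sets of \emph{parameters}: there is an $n<\omega$, depending only on $\varphi$, such that no parameters $c_{0},\dots,c_{n}$ are shattered by the family $\{\{c:\varphi(g,c)\}:g\in G\}$ (this is NIP of $\varphi$ with the roles of the two variables exchanged). I claim this $n$ works. Given any parameters $c_{1},\dots,c_{m}$, choose $I\subseteq\{1,\dots,m\}$ minimal with $\bigcap_{i\in I}H_{c_{i}}=\bigcap_{i=1}^{m}H_{c_{i}}$; it then suffices to bound $\left|I\right|$ by $n$. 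By minimality, removing any $j\in I$ strictly enlarges the intersection, so for each $j\in I$ I may pick $g_{j}\in\bigcap_{i\in I\setminus\{j\}}H_{c_{i}}\setminus\bigcap_{i\in I}H_{c_{i}}$. Thus $g_{j}\in H_{c_{i}}$ for every $i\in I$ with $i\neq j$, while necessarily $g_{j}\notin H_{c_{j}}$.

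The key step is a product construction. Fix a linear order on $I$ and, for $S\subseteq I$, let $g_{S}=\prod_{j\in S}g_{j}$, the product taken in increasing order. I claim that for every $i\in I$ one has $\varphi(g_{S},c_{i})$ iff $i\notin S$. Indeed, if $i\notin S$ then every factor of $g_{S}$ lies in the subgroup $H_{c_{i}}$, hence so does $g_{S}$; and if $i\in S$ then $g_{S}=a\,g_{i}\,b$, where $a$ and $b$ are the subproducts of the factors before and after $g_{i}$, all of which lie in $H_{c_{i}}$, so $g_{S}\in H_{c_{i}}$ would give $g_{i}=a^{-1}g_{S}b^{-1}\in H_{c_{i}}$, contradicting $g_{i}\notin H_{c_{i}}$. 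Consequently, for each $T\subseteq I$ the element $g_{I\setminus T}$ satisfies $\{i\in I:\varphi(g_{I\setminus T},c_{i})\}=T$; that is, the parameters $\{c_{i}:i\in I\}$ are shattered by the sets $\{c:\varphi(g,c)\}$. By the choice of $n$ this forces $\left|I\right|\leq n$, and since $\bigcap_{i\in I}H_{c_{i}}=\bigcap_{i=1}^{m}H_{c_{i}}$, the full intersection is already an intersection of at most $n$ of the $H_{c_{i}}$, as required.

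I expect the product computation to be the only genuinely delicate point, precisely because $G$ need not be abelian: one must verify that for $i\in S$ the single ``bad'' factor $g_{i}$ cannot be cancelled by the others, which is exactly what the nested choice of the $g_{j}$ (each lying in all $H_{c_{i}}$ except the $j$-th) guarantees. Everything else is bookkeeping, and the fact that $n$ depends only on $\varphi$ is immediate, since $n$ is simply the VC-dimension of $\varphi$ with its variables transposed.
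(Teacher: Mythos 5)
Your proof is correct: the nested choice of the $g_{j}$, the ordered-product computation showing $\varphi(g_{S},c_{i})$ iff $i\notin S$, and the appeal to NIP of $\varphi$ with the variables exchanged (noting that minimality of $I$ makes the $c_{i}$ distinct, so they are genuinely shattered) is exactly the standard Baldwin--Saxl argument. The paper states this as a Fact with a citation to \cite{BaSxl} and gives no proof of its own, and your argument matches the classical one from that source (also found in Simon's NIP book), so there is nothing to add.
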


\subsection{Proof of the main result}
\begin{thm}
\label{thm:Main-omega-categorical}If $\left(G,\cdot\right)$ is an
infinite dp-minimal $\omega$-categorical group then $G$ contains
an infinite $\emptyset$-definable abelian subgroup. 
\end{thm}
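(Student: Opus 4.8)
The plan is to build the subgroup in two stages: first produce an infinite abelian subgroup definable over \emph{finitely many} parameters using only NIP and local finiteness, and then invoke dp-minimality (Fact~\ref{fact:(Pierre)}) to descend to an $\emptyset$-definable one. Here $C(X)$ denotes the centralizer of $X$ and $Z(H)$ the center of $H$; all are definable, and $Z(G)$ is $\emptyset$-definable. If $Z(G)$ is infinite we are immediately done, so assume $Z(G)$ is finite. To obtain an infinite abelian subgroup over finite parameters, recall that $G$ is infinite and locally finite, so Fact~\ref{fact:locally finite contains infinite abelian-1} supplies an infinite abelian subgroup $A\le G$. The family $\{C(a):a\in G\}$ is uniformly definable by $xa=ax$, so Baldwin--Saxl (Fact~\ref{fact:BaldwinSaxl}) yields finitely many $a_{1},\dots,a_{n}\in A$ with $C(A)=\bigcap_{a\in A}C(a)=C(a_{1})\cap\dots\cap C(a_{n})$. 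Since $A$ is abelian, $A\subseteq C(A)$, so $C(A)$ is infinite, and every element of $A$ commutes with all of $C(A)$, whence $A\subseteq Z(C(A))$. Thus $B:=Z(C(A))$ is an infinite abelian subgroup, definable over the finite tuple $\bar{a}=(a_{1},\dots,a_{n})$ by some formula $\psi(x;\bar{a})$.

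It remains to descend to $\emptyset$-definability, and this is the only place dp-minimality is needed. I would set $N:=\bigcap_{\sigma\in\Aut(G)}B^{\sigma}$, equivalently $N=\bigcap\{\psi(G;\bar{a}'):\bar{a}'\equiv\bar{a}\}$. As $N$ is $\Aut(G)$-invariant it is $\emptyset$-definable by property (2), and it is abelian as a subgroup of $B$. By Baldwin--Saxl the intersection reduces to a finite subintersection $N=B^{\sigma_{1}}\cap\dots\cap B^{\sigma_{k}}$, so $N$ is genuinely definable. The one remaining point — and the main obstacle — is that $N$ is \emph{infinite}.

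This is exactly where Fact~\ref{fact:(Pierre)} is decisive. I would show that all conjugates $B^{\sigma}$ are pairwise commensurable (each of finite index in their pairwise intersection); granting this, the finite subintersection $N$ has finite index in $B$, hence is infinite. To establish commensurability, consider on the orbit $O=\{\bar{a}':\bar{a}'\equiv\bar{a}\}$ the relation $H_{1}\leqstar H_{2}\iff[H_{1}:H_{1}\cap H_{2}]<\infty$, where $H_{i}=\psi(G;\bar{a}^{(i)})$. By Fact~\ref{fact:(Pierre)} any two such subgroups are $\leqstar$-comparable, and a short computation (via $[H_{1}:H_{1}\cap H_{2}\cap H_{3}]\le[H_{1}:H_{1}\cap H_{2}]\cdot[H_{2}:H_{2}\cap H_{3}]$) shows $\leqstar$ is transitive, so it is an $\Aut(G)$-invariant total preorder on $O$. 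Elimination of $\exists^{\infty}$ (property (3)) makes the finite-index condition uniformly definable, so this preorder is $\emptyset$-definable.

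The crux is then to rule out any strict $\leqstar$-comparison: since $\Aut(G)$ acts transitively on $O$ and preserves $\leqstar$, a single strict comparison $B^{\sigma}\lneq_{*}B^{\tau}$ would, upon repeatedly applying an automorphism of $O$ carrying the smaller class to the larger one, yield an infinite strictly increasing chain of commensurability classes of conjugates of $B$ — which $\omega$-categoricity forbids, as there are only finitely many $\emptyset$-definable relations and a uniform finite bound on all finite indices. Hence the preorder is symmetric, all conjugates of $B$ are commensurable, and $N$ is the desired infinite $\emptyset$-definable abelian subgroup. Pinning down this last chain-finiteness argument from $\omega$-categoricity is the delicate step; everything before it is routine.
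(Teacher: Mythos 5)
Your proposal breaks at exactly the point you yourself flag as delicate, and the justification offered there does not work. You claim that a strict comparison $B^{\sigma}\lneq_{*}B^{\tau}$ between automorphic conjugates yields, via iteration, an infinite strictly increasing chain of commensurability classes, ``which $\omega$-categoricity forbids, as there are only finitely many $\emptyset$-definable relations and a uniform finite bound on all finite indices.'' Neither principle rules this out. Finitely many $\emptyset$-definable relations bounds the number of relations, not the number of classes of a single relation: an $\emptyset$-definable total preorder with infinitely many classes on one orbit is entirely compatible with $\omega$-categoricity and even with dp-minimality (in $\left(\Qq,<\right)$ the orbit of any point carries the invariant order $<$ with infinite strict chains). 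And the uniform bound $n_{*}$ on finite indices constrains only the finite side of a strict comparison, $\left[H_{1}:H_{1}\cap H_{2}\right]<\infty$; the strictness is witnessed by $\left[H_{2}:H_{1}\cap H_{2}\right]=\infty$, which the bound never touches, so iterating the automorphism produces a chain in which every finite index stays $\leq n_{*}$ and every infinite index stays infinite --- no contradiction. Note also that Fact \ref{fact:(Pierre)} is vacuous for (almost) nested pairs, so dp-minimality alone does not break such chains either. Excluding this configuration is precisely where the paper does genuine group-theoretic work: it first reduces to a \emph{minimal} infinite $\emptyset$-definable subgroup (so that any infinite commuting $\emptyset$-definable set generates everything), replaces almost-inclusion by actual inclusion via the groups $C^{0}\left(a\right)$ (Main Lemma \ref{lem:Main Lemma}: these are linearly ordered by genuine inclusion), and then kills the bad chains with the two-case pigeonhole on $Y=\set{a\in G}{a\in C^{0}\left(a\right)}$, using the membership of $a$ in its own $C^{0}\left(a\right)$ together with a finite-index bound. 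None of this is recoverable from the general principles you invoke, and your proposal never performs the minimality reduction that makes the endgame work.

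There is a secondary, more local gap in stage one: Baldwin--Saxl (Fact \ref{fact:BaldwinSaxl}) only says that each \emph{finite} intersection is a subintersection of size $n$; it does not collapse the infinite intersection $C\left(A\right)=\bigcap_{a\in A}C\left(a\right)$ to a finite one (a strictly descending chain of uniformly definable subgroups satisfies Baldwin--Saxl with $n=1$). The paper's Lemma \ref{lem:C0(a) is infinite} faces the same issue and resolves it by choosing a finite subintersection of \emph{maximal index} --- legitimate there only because every index involved is uniformly bounded by $n_{*}$, thanks to the definition of $H_{a}$; for arbitrary $a,b\in A$ you have no such uniform finiteness, so the trick does not transfer. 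That said, your stage-one conclusion (an infinite abelian subgroup definable over finitely many parameters, via $Z\left(C\left(A\right)\right)$) is a known fact about NIP groups which the paper itself cites, remarking explicitly that the entire difficulty is $\emptyset$-definability --- exactly the part your argument leaves open.
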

By Proposition \ref{prop:equivalent conditions for being n-b-f} we
get the following. 
\begin{cor}
If $\left(G,\cdot\right)$ is a dp-minimal $\omega$-categorical group
then $G$ is nilpotent-by-finite. 
\end{cor}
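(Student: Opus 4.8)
The plan is to prove the Theorem directly, from which the Corollary is immediate: by Remark~\ref{rem:dp-minimal satisfy the conditions} the class of $\omega$-categorical dp-minimal groups satisfies the hypotheses of Proposition~\ref{prop:equivalent conditions for being n-b-f}, and the Theorem is exactly condition (3) there, so condition (1), nilpotent-by-finiteness, follows. For the Theorem itself the overall strategy is to first produce an infinite \emph{definable} abelian subgroup and then to ``average'' it over its $\Aut(G)$-orbit, using dp-minimality to keep the resulting $\emptyset$-definable subgroup infinite. Since $G$ is $\omega$-categorical it is locally finite, so by Fact~\ref{fact:locally finite contains infinite abelian-1} it contains an infinite abelian subgroup; as $G$ is NIP, \cite[Claim 4.3]{Sh783} then upgrades this to an infinite definable abelian subgroup $H$, say $H=\varphi(G,\bar c)$ for some formula $\varphi$ and parameter $\bar c$. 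The remaining, and main, task is to descend from $H$ to something $\emptyset$-definable without losing infiniteness.

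Set $N=\bigcap\set{\varphi(G,\bar c')}{\bar c'\equiv\bar c}$, the intersection of all $\Aut(G)$-conjugates of $H$. Being invariant under $\Aut(G)$, the set $N$ is $\emptyset$-definable (invariant sets are $\emptyset$-definable in an $\omega$-categorical structure); it is a subgroup as an intersection of subgroups; and it is abelian, since $N\subseteq H$ and $H$ is abelian. Thus it suffices to show that $N$ is infinite. The family $\set{\varphi(G,\bar c')}{\bar c'\equiv\bar c}$ is a uniformly definable family of subgroups, each conjugate to $H$ and hence infinite, so Fact~\ref{fact:BaldwinSaxl} applies: there is $n<\omega$ so that every finite subintersection is already an intersection of $n$ of them, and by the standard consequence of Baldwin--Saxl the whole intersection is realized by finitely many conjugates, i.e.\ $N=K_{1}\cap\cdots\cap K_{m}$ with each $K_{i}$ an infinite definable subgroup conjugate to $H$.

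It remains to see that such a finite intersection of infinite definable subgroups is infinite, and this is exactly where dp-minimality enters. By Fact~\ref{fact:(Pierre)}, for any two definable subgroups one has finite index in their intersection; writing $K\preceq K'$ for $[K:K\cap K']<\infty$, this makes $\preceq$ a total quasi-order on definable subgroups (transitivity being a routine index computation). Among the finitely many $K_{1},\dots,K_{m}$ there is therefore a $\preceq$-least one, say $K_{1}$, so that $[K_{1}:K_{1}\cap K_{i}]<\infty$ for every $i$; since $N=\bigcap_{i=1}^{m}(K_{1}\cap K_{i})$ is a finite intersection of finite-index subgroups of $K_{1}$, we get $[K_{1}:N]<\infty$, whence $N$ is infinite as $K_{1}$ is. This yields the desired infinite $\emptyset$-definable abelian subgroup. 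The main obstacle is the middle step: one must know that the a priori infinite intersection over the whole orbit collapses to a finite subintersection, for only then does the almost-linear order supplied by dp-minimality force the intersection to stay infinite. This is the single point at which Baldwin--Saxl and dp-minimality are used in tandem, and it is what the bare existence of an infinite abelian subgroup (Fact~\ref{fact:locally finite contains infinite abelian-1}) cannot provide on its own.
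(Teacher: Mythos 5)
There is a genuine gap, and it sits at the one step your whole argument hinges on: the claim that ``by the standard consequence of Baldwin--Saxl the whole intersection is realized by finitely many conjugates,'' i.e.\ that $N=\bigcap\set{\varphi\left(G,\bar{c}'\right)}{\bar{c}'\equiv\bar{c}}$ equals $K_{1}\cap\cdots\cap K_{m}$ for finitely many conjugates of $H$. Fact \ref{fact:BaldwinSaxl} only says that every \emph{finite} intersection from a uniformly definable family of subgroups is already an intersection of $n$ of them; it does not collapse an infinite intersection to a finite subintersection. That collapse is a chain condition, valid in stable groups, but false in NIP and even dp-minimal groups: in $\left(\Qq_{p},+\right)$ the uniformly definable family $\set{a\Zz_{p}}{a\neq0}$ has every finite subintersection equal to a single (infinite) ball $p^{k}\Zz_{p}$, yet the full intersection is $\left\{ 0\right\} $. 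So ``all finite subintersections are infinite'' --- which your quasi-order argument via Fact \ref{fact:(Pierre)} does correctly establish, and which is essentially Corollary \ref{cor:every finite intersection is infinite} of the paper in different clothing --- is perfectly compatible with $N$ being finite or trivial. Nor does $\omega$-categoricity obviously rescue the step: although there are only finitely many types of $n$-tuples of parameters, equal types do not give equal subgroups, and nothing in your argument rules out an automorphism $\sigma$ with $\sigma\left(K\right)\subsetneq K$ of \emph{infinite} index, which would produce a strictly descending chain of conjugates (the finite-index case is excluded by a uniform bound $n_{*}$ as in the paper, but Fact \ref{fact:(Pierre)} only makes \emph{one} of the two indices $\left[K:K\cap K'\right]$, $\left[K':K\cap K'\right]$ finite, and it may systematically be the wrong side for your bookkeeping).

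It is worth noting that the paper explicitly anticipates your route and its obstruction: the unlabelled remark following Remark \ref{rem:Krupinski-method} observes that Fact \ref{fact:locally finite contains infinite abelian-1} together with \cite[Claim 4.3]{Sh783} yields an infinite \emph{definable} abelian subgroup, but that this subgroup ``is not necessarily $\emptyset$-definable'' --- intersecting the orbit is the natural fix, and the gap above is why it does not come for free. The paper's actual proof circumvents this by passing to a minimal infinite $\emptyset$-definable subgroup and working with centralizers: for $C^{0}\left(a\right)=\bigcap\set{C\left(b\right)}{b\in H_{a}}$ the collapse to a finite subintersection \emph{is} proved (Lemma \ref{lem:C0(a) is infinite}), but only because every group $C\left(b\right)\cap C\left(a\right)$ with $b\in H_{a}$ has index $<n_{*}$ in the fixed group $C\left(a\right)$, uniformly; this makes ``a finite subintersection of maximal index in $C\left(a\right)$'' well defined, and Baldwin--Saxl then shows that this subintersection absorbs every further $C\left(b\right)$. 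Your orbit of conjugates carries no analogous uniform one-sided bound, and that is precisely the missing ingredient. To repair the proposal you would need to manufacture a substitute for the pair $H_{a}$, $n_{*}$ adapted to the orbit of $H$ --- and then still handle the possibility that the resulting $\emptyset$-definable group is finite, which is what forces the paper into the Main Lemma \ref{lem:Main Lemma} comparability statement and the two-case analysis ($Y$ infinite versus $Y$ finite) in the proof of Theorem \ref{thm:Main-omega-categorical}. In short: your reduction via Proposition \ref{prop:equivalent conditions for being n-b-f} and Remark \ref{rem:dp-minimal satisfy the conditions} is exactly right, and your final dp-minimality computation is sound, but the middle step is unjustified and cannot be fixed by Baldwin--Saxl alone.
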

For the proof we work in a countable (so $\omega$-saturated) model.
So fix such a group $G$. By $\omega$-categoricity, there is a minimal
infinite $\emptyset$-definable subgroup $G_{0}\leq G$ (i.e., $G_{0}$
contains no $\emptyset$-definable infinite subgroups), so we may
assume that $G=G_{0}$. 
\begin{lem}
\label{Lem:dp-min applied to centralizers}For every $a,b\in G$ either
$\left[C\left(a\right):C\left(b\right)\cap C\left(a\right)\right]<\infty$
or $\left[C\left(b\right):C\left(b\right)\cap C\left(a\right)\right]<\infty$.

\begin{proof}
This follows directly from Fact \ref{fact:(Pierre)}. 
\end{proof}
\end{lem}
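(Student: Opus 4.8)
The plan is simply to recognize that both centralizers are definable subgroups and then invoke Fact \ref{fact:(Pierre)} verbatim. First I would note that, for a fixed $a\in G$, the centralizer $C\left(a\right)=\set{x\in G}{x\cdot a=a\cdot x}$ is defined in the pure group language by the formula $x\cdot a=a\cdot x$ with $a$ as a parameter, and that it is a subgroup of $G$ (it contains $e$ and is closed under products and inverses). The same applies to $C\left(b\right)$. This is the only hypothesis of Fact \ref{fact:(Pierre)} that needs checking, and it is immediate.

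With this in hand, Fact \ref{fact:(Pierre)} applies directly to the pair $H_{1}=C\left(a\right)$, $H_{2}=C\left(b\right)$: since $G$ is dp-minimal, one of $\left[C\left(a\right):C\left(b\right)\cap C\left(a\right)\right]$, $\left[C\left(b\right):C\left(b\right)\cap C\left(a\right)\right]$ is finite, which is exactly the assertion. There is no real obstacle here --- the entire content lies in Fact \ref{fact:(Pierre)}, and Lemma \ref{Lem:dp-min applied to centralizers} only specializes the commensurability dichotomy for definable subgroups to the centralizers that will be needed downstream. The sole point one must not overlook is definability, which in the pure group language is trivial, so the proof reduces to a single line citing the fact.
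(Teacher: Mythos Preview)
Your proposal is correct and matches the paper's own proof exactly: the paper simply says ``This follows directly from Fact \ref{fact:(Pierre)},'' and your write-up just spells out the one implicit step (that $C(a)$ and $C(b)$ are definable subgroups) before invoking that fact with $H_{1}=C(a)$, $H_{2}=C(b)$.
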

Let $X=\set{a\in G}{\left|C\left(a\right)\right|=\infty}$. By elimination
of $\exists^{\infty}$, $X$ is definable. By Fact \ref{fact:locally finite contains infinite abelian-1},
$X$ is infinite. For $a,b\in X$, by Lemma \ref{Lem:dp-min applied to centralizers},
it follows that either $C\left(a\right)\cap C\left(b\right)$ has
finite index in $C\left(a\right)$ or in $C\left(b\right)$. In either
case, $C\left(a\right)\cap C\left(b\right)$ is infinite. Since $C\left(a\right)\cap C\left(b\right)\subseteq C\left(ab\right)$,
it follows that $X$ is a group. By our assumption on $G$ (it contains
no infinite $\emptyset$-definable subgroups), $G=X$. 

Compare the following corollary with Remark \ref{rem:Krupinski-method}. 
\begin{cor}
\label{cor:every finite intersection is infinite}For every $a_{0},\ldots,a_{n-1}\in G$,
$\bigcap\set{C\left(a_{i}\right)}{i<n}$ is infinite.
\end{cor}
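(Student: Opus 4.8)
The plan is to prove Corollary \ref{cor:every finite intersection is infinite} by induction on $n$, leveraging the structural work already done. The base cases are immediate: for $n=1$ we have just shown $G=X$, so every element $a_0\in G$ satisfies $\left|C\left(a_0\right)\right|=\infty$; the case $n=2$ is exactly the observation made just before $X$ was shown to be a group, namely that for $a,b\in X=G$, Lemma \ref{Lem:dp-min applied to centralizers} forces $C\left(a\right)\cap C\left(b\right)$ to have finite index in one of the two infinite groups $C\left(a\right)$ or $C\left(b\right)$, hence to be infinite.

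For the inductive step I would try to combine Fact \ref{fact:(Pierre)} with the induction hypothesis. Set $H=\bigcap\set{C\left(a_i\right)}{i<n-1}$, which is definable and, by the induction hypothesis, infinite. The goal is to show $H\cap C\left(a_{n-1}\right)$ is infinite. Since $G$ is dp-minimal and both $H$ and $C\left(a_{n-1}\right)$ are definable subgroups, Fact \ref{fact:(Pierre)} gives that either $\left[H:H\cap C\left(a_{n-1}\right)\right]<\infty$ or $\left[C\left(a_{n-1}\right):H\cap C\left(a_{n-1}\right)\right]<\infty$. In either case $H\cap C\left(a_{n-1}\right)$ has finite index in an infinite group and is therefore infinite, completing the induction. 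This is the natural approach because Fact \ref{fact:(Pierre)} is stated precisely for arbitrary pairs of definable subgroups, not just centralizers, so no new ingredient is needed.

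The step I expect to require the most care is confirming that the intermediate object $H$ is genuinely eligible for Fact \ref{fact:(Pierre)}: one must check that $H$ is a definable subgroup (it is a finite intersection of centralizers, hence definable and a subgroup) so that the dp-minimal dichotomy applies to the pair $\left(H,C\left(a_{n-1}\right)\right)$. There is no subtlety about $\emptyset$-definability here, since Fact \ref{fact:(Pierre)} makes no such demand and we only care about cardinality. An alternative, slightly slicker route avoids naming $H$: since each $C\left(a_i\right)$ is infinite and dp-minimal groups satisfy that pairwise intersections of definable subgroups drop index on one side, one can argue that the whole finite intersection cannot be finite, for a finite intersection would be a common finite-index-from-below subgroup contradicting infinitude of the factors; but the clean inductive formulation above is more transparent and I would present that one.
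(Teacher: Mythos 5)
Your proof is correct and follows essentially the same route as the paper: induction on $n$ with Fact \ref{fact:(Pierre)} applied at each step to a pair of infinite definable subgroups whose intersection is the target. The only (immaterial) difference is the choice of pair --- the paper applies the dichotomy to the two overlapping subintersections $\bigcap\set{C\left(a_{i}\right)}{i<n-1}$ and $\bigcap\set{C\left(a_{i}\right)}{1\leq i<n}$, both infinite by induction, whereas you pair $\bigcap\set{C\left(a_{i}\right)}{i<n-1}$ with $C\left(a_{n-1}\right)$, which is infinite because $G=X$; both variants are valid.
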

\begin{proof}
By induction on $n$. For $n=1$ and $n=2$ we just gave the argument.
For larger $n$ it is exactly the same: $\bigcap\set{C\left(a_{i}\right)}{i<n}$
has finite index in one of $\bigcap\set{C\left(a_{i}\right)}{i<n-1}$
or $\bigcap\set{C\left(a_{i}\right)}{1\leq i<n}$, both infinite. 
\end{proof}
For every $a\in G$ let $H_{a}=\set{b\in G}{\left[C\left(a\right):C\left(b\right)\cap C\left(a\right)\right]<\infty}$,
and define 
\[
C^{0}\left(a\right)=\bigcap\set{C\left(b\right)}{b\in H_{a}}.
\]
Observe that $C^{0}(a)$ is definable over $a$ since $H_{a}$ is
definable. 

By $\omega$-categoricity there exists some $n_{*}$ such that for
all $a,b\in G$, $\left[C\left(a\right):C\left(b\right)\cap C\left(a\right)\right]<n_{*}$
iff $\left[C\left(a\right):C\left(b\right)\cap C\left(a\right)\right]<\infty$.
\begin{mainlemma}
\label{lem:Main Lemma}For every $a,b\in G$ either $C^{0}\left(a\right)\subseteq C^{0}\left(b\right)$
or $C^{0}\left(b\right)\subseteq C^{0}\left(a\right)$.
\end{mainlemma}
\begin{proof}
By Fact \ref{fact:(Pierre)} either $\left[C\left(a\right):C\left(b\right)\cap C\left(a\right)\right]<n_{*}$
or $\left[C\left(b\right):C\left(b\right)\cap C\left(a\right)\right]<n_{*}$.
Suppose that the former happens. Then $C^{0}\left(a\right)\subseteq C^{0}\left(b\right)$:
if $d\in H_{b}$ , $\left[C\left(b\right):C\left(b\right)\cap C\left(d\right)\right]<n_{*}$,
so 
\begin{eqnarray*}
\left[C\left(a\right):C\left(a\right)\cap C\left(d\right)\right] & \leq & \left[C\left(a\right):C\left(a\right)\cap C\left(d\right)\cap C\left(b\right)\right]\\
 & \leq & \left[C\left(a\right):C\left(a\right)\cap C\left(b\right)\right]\cdot\left[C\left(b\right):C\left(b\right)\cap C\left(d\right)\right]<n_{*}^{2}.
\end{eqnarray*}
Hence $d\in H_{a}$. 
\end{proof}
\begin{lem}
\label{lem:C0(a) is infinite}For every $a\in G$ the group $C^{0}\left(a\right)$
is infinite. Moreover, $\left[C\left(a\right):C^{0}\left(a\right)\right]<\infty$. 
\end{lem}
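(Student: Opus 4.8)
The plan is to prove the ``moreover'' clause first, since it implies the first assertion. As $a\in H_{a}$, the group $C\left(a\right)$ is itself one of the centralizers defining $C^{0}\left(a\right)$, so $C^{0}\left(a\right)\subseteq C\left(a\right)$; and $C\left(a\right)$ is infinite because we showed $X=G$. Hence once we know $\left[C\left(a\right):C^{0}\left(a\right)\right]<\infty$ it follows at once that $C^{0}\left(a\right)$ is infinite. Using again that $a\in H_{a}$, I would rewrite $C^{0}\left(a\right)=\bigcap\set{C\left(a\right)\cap C\left(b\right)}{b\in H_{a}}$, an intersection of subgroups of $C\left(a\right)$. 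By the definition of $H_{a}$ together with the choice of $n_{*}$, each factor satisfies $\left[C\left(a\right):C\left(a\right)\cap C\left(b\right)\right]<n_{*}$.

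First I would reduce this a priori infinite intersection to a finite one. Apply the Baldwin--Saxl lemma (Fact \ref{fact:BaldwinSaxl}) to the formula $\psi\left(x;b\right)$ expressing $xa=ax\wedge xb=bx$, which defines the family $\set{C\left(a\right)\cap C\left(b\right)}{b\in G}$; let $N$ be the resulting bound. Then any finite subintersection $\bigcap_{b\in J}\left(C\left(a\right)\cap C\left(b\right)\right)$, with $J\subseteq H_{a}$ finite, already equals an intersection of at most $N$ of these subgroups. Since each such subgroup has index $<n_{*}$ in $C\left(a\right)$, every finite subintersection has index $<n_{*}^{N}$ in $C\left(a\right)$. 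Thus the indices of all finite subintersections in $C\left(a\right)$ are bounded by the single number $n_{*}^{N}$.

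Now I would choose a finite subintersection $K^{*}=\bigcap_{b\in J_{0}}\left(C\left(a\right)\cap C\left(b\right)\right)$ of \emph{maximal} index in $C\left(a\right)$; this exists because these indices are positive integers bounded by $n_{*}^{N}$. For any $b'\in H_{a}$, the subgroup $K^{*}\cap\left(C\left(a\right)\cap C\left(b'\right)\right)$ is again a finite subintersection, so its index in $C\left(a\right)$ is at least that of $K^{*}$; by maximality the two indices agree, forcing $K^{*}\subseteq C\left(a\right)\cap C\left(b'\right)\subseteq C\left(b'\right)$. As $b'\in H_{a}$ was arbitrary, $K^{*}\subseteq\bigcap\set{C\left(b\right)}{b\in H_{a}}=C^{0}\left(a\right)$, while $C^{0}\left(a\right)\subseteq K^{*}$ holds trivially. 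Hence $C^{0}\left(a\right)=K^{*}$ and $\left[C\left(a\right):C^{0}\left(a\right)\right]<n_{*}^{N}<\infty$, as required.

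The main obstacle is precisely this passage from the infinite intersection defining $C^{0}\left(a\right)$ to a finite subintersection: Baldwin--Saxl by itself only controls finite intersections and does not forbid an infinite strictly descending chain of them in an infinite group. What rescues the argument is the uniform bound $n_{*}$ on the indices $\left[C\left(a\right):C\left(a\right)\cap C\left(b\right)\right]$ for $b\in H_{a}$, which turns the Baldwin--Saxl bound $N$ into a uniform ceiling $n_{*}^{N}$ on the index of every finite subintersection; it is exactly this ceiling that lets the maximal-index choice succeed.
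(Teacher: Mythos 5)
Your proof is correct and follows essentially the same route as the paper: Baldwin--Saxl plus the uniform bound $n_{*}$ yields the ceiling $n_{*}^{N}$ on indices of finite subintersections, and a maximal-index finite subintersection is then shown to equal $C^{0}\left(a\right)$ by the same maximality argument. The only (cosmetic, arguably cleaner) difference is that you apply Fact \ref{fact:BaldwinSaxl} directly to the family $\set{C\left(a\right)\cap C\left(b\right)}{b\in G}$ via the formula $xa=ax\wedge xb=bx$, whereas the paper applies it to plain centralizers and intersects with $C\left(a\right)$ afterwards.
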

\begin{proof}
By Fact \ref{fact:BaldwinSaxl}, there is some $N$ such that for
every $k<\omega$ and every $a_{i}\in G$ for $i<k$, $\bigcap\set{C\left(a_{i}\right)}{i<k}=\bigcap\set{C\left(a_{i}\right)}{i\in I_{0}}$
where $I_{0}\subseteq k$ is of size $\leq N$. Find $a_{1},\ldots,a_{N}\in H_{a}$
with $\bigcap\set{C\left(a_{i}\right)}{i<N}\cap C\left(a\right)$
of maximal index in $C\left(a\right)$ (this index is bounded by
$n_{*}^{N}$). Let $D=\bigcap\set{C\left(a_{i}\right)}{i<N}\cap C\left(a\right)$.
Then, for every $b\in H_{a}$, $C\left(b\right)\cap D$ equals to
some sub-intersection $D'$ of size $N$, but then $\left[C\left(a\right):D'\right]=\left[C\left(a\right):D\right]$
so $D'=D$ and hence  $\bigcap\set{C\left(b\right)}{b\in H_{a}}=D$
and in particular it is infinite and of finite index in $C\left(a\right)$.
\end{proof}

\begin{proof}
[Proof of Theorem \ref{thm:Main-omega-categorical}.]Split into two
cases. 

\begin{casenv}
\item The set $Y=\set{a\in G}{a\in C^{0}\left(a\right)}$ is infinite. 

In this case, note that if $a,b\in Y$ then by Main Lemma \ref{lem:Main Lemma},
we may assume that $C^{0}\left(a\right)\subseteq C^{0}\left(b\right)\subseteq C\left(b\right)$
(because $b\in H_{b}$). But then $a\in C\left(b\right)$, so $Y$
is an infinite commutative $\emptyset$-definable set. Hence the group
generated by $Y$ must be abelian, and it must be $G$ by our choice
of $G$, so we are done. 
\item The set $Y$ is finite. 

Pick some $a_{0}\notin Y$. By induction on $n<\omega$, choose $a_{n}\in C^{0}\left(a_{n-1}\right)\backslash Y$.
We can find such elements by Lemma \ref{lem:C0(a) is infinite}. For
$n<\omega$, if $C^{0}\left(a_{n}\right)\subseteq C^{0}\left(a_{n+1}\right)$
then $a_{n+1}\in C^{0}\left(a_{n+1}\right)$ which cannot be, so by
Main Lemma \ref{lem:Main Lemma}, $C^{0}\left(a_{n+1}\right)\subseteq C^{0}\left(a_{n}\right)$. 

Let $K>\left[C\left(a_{n}\right):C^{0}\left(a_{n}\right)\right]$
for all $n<\omega$. As $a_{K}\in C^{0}\left(a_{i}\right)$ for all
$i<K$, $a_{i}\in C\left(a_{K}\right)$. Hence for some $i<j<K$,
$a_{i}^{-1}a_{j}\in C^{0}\left(a_{K}\right)\subseteq C^{0}\left(a_{i}\right)$.
But $a_{j}\in C^{0}\left(a_{i}\right)$ as well, so $a_{i}\in C^{0}\left(a_{i}\right)$
\textemdash{} contradiction. 
\end{casenv}
\end{proof}

\subsection{Concluding remarks}
\begin{problem}
Can we generalize this result to work under weaker assumptions than
$\omega$-categoricity, such as elimination of $\exists^{\infty}$?
(i.e., assume that $G$ is a dp-minimal group eliminating $\exists^{\infty}$,
also for imaginaries, with an infinite abelian subgroup, then does
it contain an infinite $\emptyset$-definable subgroup?)
\end{problem}

\begin{problem}
\label{prob:abelian by finite}Can one improve this to showing that
every dp-minimal $\omega$-categorical group is abelian-by-finite?
\end{problem}
\begin{rem}
Any abelian-by-finite group is stable, so if the we can solve Problem
\ref{prob:abelian by finite} positively, then it would mean that
any dp-minimal omega-categorical group is stable. Why? suppose that
$\left(G,\cdot\right)$ is a group with $H\leq G$ abelian of finite
index. Then there is a normal subgroup of $H$ with finite index in
$G$ (this is a standard exercise in group theory), and so we may
assume that $H$ is normal. Let $R=\Zz\left[G\right]$, the group
ring of $G$ over $\Zz$, whose elements we write as sums $\sum_{i<n}a_{i}g_{i}$
where $a_{i}\in\Zz$ and $g_{i}\in G$. Put a structure of a $\Zz\left[G\right]$-module
on $H$ by letting $\left(\sum_{i<n}a_{i}g_{i}\right)\cdot h=\sum_{i<n}a_{i}\cdot h^{g_{i}}$
(where $h^{g}=g^{-1}hg$). As a module, $H$ is stable (see \cite[Example 8.6.6]{TentZiegler}).
Now, $G$ can be interpreted in this structure (with parameters).
How? Suppose $\left[G:H\right]=n$. Then $G$ is the union of $g_{i}H$
where $\set{g_{i}}{i<n}$ are representatives for the different cosets
of $H$ in $G$. For each $i,j<n$ there is a unique $k\left(i,j\right)<n$
and $h\left(i,j\right)\in H$ such that $g_{i}\cdot g_{j}=g_{k\left(i,j\right)}h\left(i,j\right)$.
So now interpret $G$ as $\set{c_{i}}{i<n}\times H$, where the $c_{i}$'s
are distinct elements from $H$ and the product is given by $\left(c_{i},h\right)\cdot\left(c_{j},h'\right)=\left(c_{k\left(i,j\right)},h\left(i,j\right)h^{g_{j}}h'\right)$.
Since $h^{g_{j}}$ is just $g_{j}\cdot h$ in the module, this group
is definable in $H$. The map $\left(c_{i},h\right)\mapsto g_{i}h$
is then an isomorphism from this group to $G$. 
\end{rem}
\begin{problem}
Is there a (pure) group $\left(G,\cdot\right)$ which is $\omega$-categorical
NIP and unstable? 
\end{problem}

\begin{problem}
\label{prob:What-about-inp-minimal}What about inp-minimal groups?
Inp-minimality is the analogous notion to dp-minimality for $\mbox{NTP}_{2}$
\cite{MR3129735}, so it makes sense that this result still holds
there, as it does in both the simple (by \cite{MR975912}) and NIP
case. 
\end{problem}
\begin{rem}
Problem \ref{prob:What-about-inp-minimal} was solved by Frank Wagner,
who shared his proof with us in a private communication. We decided
to still include the following discussion, as it might be useful for
any future generalizations to $\mbox{NTP}_{2}$. 
\end{rem}
Using the same notation as in \cite{MR2898712}, we let $B\left(F\right)$
be the group of all continuous functions from the cantor space $2^{\omega}$
into a finite simple non-abelian group $F$. We also let $B^{-}\left(F\right)$
be the group of all such functions sending a fixed point $x_{0}\in2^{\omega}$
to $e\in F$. By \cite[Theorem 2.3]{MR679175,MR700288} they are characteristically
simple and $\omega$-categorical, and in fact by a theorem of Wilson
\cite{MR679175}, a countably infinite $\omega$-categorical characteristically
simple group is either isomorphic to one of them, is an abelian $p$-group
or is a perfect $p$-group. Neither groups is nilpotent-by-finite.
If they were nilpotent-by-finite, then there would be a normal nilpotent
subgroup of finite index, so they would be nilpotent (by Corollary
\ref{cor:Fitt}). But then they must be abelian, which they are not.

It is worthwhile to note the following.
\begin{prop}
\label{prop:B(F) is TP2}For a finite simple non-abelian group $F$,
both $B\left(F\right)$ and $B^{-}\left(F\right)$ have $\TPT$ and
in particular are not inp-minimal. 
\end{prop}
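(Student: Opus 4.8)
The plan is to exhibit a single group-language formula with $\TPT$, witnessed with inconsistency number $2$, by encoding into the parameters a family of independent (cross-cutting) ``agreement'' equivalence relations. Recall that an element of $B\left(F\right)$ is a locally constant function $2^{\omega}\to F$ with pointwise multiplication, and that $Z\left(F\right)=\left\{ e\right\} $ because $F$ is simple non-abelian; fix any $a\in F\setminus\left\{ e\right\} $, so that $C_{F}\left(a\right)$ is a proper subgroup. For a clopen $R\subseteq2^{\omega}$ write $\mathbf{1}_{R}^{a}$ for the function taking value $a$ on $R$ and $e$ off $R$. The key observation is that for $w\in B\left(F\right)$ and $r=\mathbf{1}_{R}^{a}$ one has $\left[w,r\right]\left(t\right)=\left[w\left(t\right),a\right]$ for $t\in R$ and $\left[w,r\right]\left(t\right)=e$ otherwise, so $\left[w,r\right]=e$ iff $w\left(t\right)\in C_{F}\left(a\right)$ for all $t\in R$. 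Hence the quantifier-free formula $\varphi\left(x;r,v\right):=\left[xv^{-1},r\right]=e$ expresses, when $r=\mathbf{1}_{R}^{a}$, that $x\left(t\right)v\left(t\right)^{-1}\in C_{F}\left(a\right)$ for every $t\in R$. Since $C_{F}\left(a\right)$ is a subgroup this is an equivalence relation $E_{R}$ on $x$ whose classes correspond to the continuous maps $R\to C_{F}\left(a\right)\backslash F$; as $R$ is infinite and $C_{F}\left(a\right)\neq F$, there are infinitely many of them.

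Now I would choose pairwise disjoint infinite clopen sets $\sequence{R_{i}}{i<\omega}$ in $2^{\omega}$, put $r_{i}=\mathbf{1}_{R_{i}}^{a}$, and for each $i$ pick functions $\sequence{v_{i,j}}{j<\omega}$ in pairwise distinct $E_{R_{i}}$-classes (possible by the previous paragraph); set $a_{i,j}=\left(r_{i},v_{i,j}\right)$, all lying in the countable model $B\left(F\right)\preceq\C$. For row-inconsistency: if some $x$ satisfied both $\varphi\left(x;a_{i,j}\right)$ and $\varphi\left(x;a_{i,j'}\right)$ with $j\neq j'$, then pointwise on $R_{i}$ both $xv_{i,j}^{-1}$ and $xv_{i,j'}^{-1}$ would lie in $C_{F}\left(a\right)$, forcing $v_{i,j'}v_{i,j}^{-1}\in C_{F}\left(a\right)$ throughout $R_{i}$, i.e. $v_{i,j}\,E_{R_{i}}\,v_{i,j'}$, a contradiction; thus each row is $2$-inconsistent, and being a first-order statement about the named parameters this is inherited by $\C$. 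For path-consistency: given $\eta\colon\omega\to\omega$ and finite $I\subseteq\omega$, the function equal to $v_{i,\eta\left(i\right)}$ on $R_{i}$ for $i\in I$ and to $e$ elsewhere is locally constant, hence an element of $B\left(F\right)$, and satisfies $\varphi\left(x;a_{i,\eta\left(i\right)}\right)$ for all $i\in I$; by compactness the full path is consistent in $\C$. Therefore $\varphi$ has $\TPT$, so $\Th\left(B\left(F\right)\right)$ is not $\NTPT$ and in particular not inp-minimal.

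For $B^{-}\left(F\right)$ I would run the same construction inside a clopen $U\subseteq2^{\omega}$ with $x_{0}\notin U$, taking all $R_{i}\subseteq U$; then $r_{i}$, $v_{i,j}$ and all the witnesses above take the value $e$ at $x_{0}$, hence lie in $B^{-}\left(F\right)$, and the identical computation yields $\TPT$. The one genuinely delicate point is the isolation of a \emph{single} group-language formula that uniformly expresses ``agreement on a region'' with the region carried by the parameter; this is precisely what the commutator trick supplies, detecting membership in $C_{F}\left(a\right)$ along $\supp\left(r\right)$, while the cross-cutting needed for consistent paths comes for free from choosing the supports $R_{i}$ pairwise disjoint. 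Everything else --- the pointwise evaluation of $\left[w,r\right]$, the count of $E_{R}$-classes, and the finite gluing --- is routine.
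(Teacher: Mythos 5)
Your proof is correct, and it takes a genuinely different route from the paper's. The paper works with the one-parameter formula $x\neq e\wedge x\in C\left(C\left(y\right)\right)$: for $f_{s}$ the function with value $g\neq e$ on a clopen $s$ and $e$ elsewhere, membership in $C\left(C\left(f_{s}\right)\right)$ forces the support of $x$ into $s$, so disjoint supports give $2$-inconsistency while $s_{1}\subseteq s_{2}$ gives $f_{s_{1}}\models\varphi\left(x,f_{s_{2}}\right)$; the $\TPT$-array is then produced not by hand but via an auxiliary criterion (Lemma \ref{lem:Criterion for TP2}, which manufactures the cross-cutting row structure from a fixed infinite set and arbitrary disjoint subsets of it), together with a compactness step needed because the parameters $f_{n}$ correspond to \emph{infinite} unions of clopen sets, which are no longer clopen. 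You instead pack both the region and a coset representative into the two-parameter commutator formula $\left[xv^{-1},r\right]=e$ --- whose pointwise evaluation, using exactly the triviality of $Z\left(F\right)$ to make $C_{F}\left(a\right)$ proper, expresses agreement modulo $C_{F}\left(a\right)$ on $\supp\left(r\right)$ --- and build the array directly: rows are disjoint clopen regions, columns are distinct classes of the agreement equivalence relation, row $2$-inconsistency is just transitivity of that relation, and path consistency comes from explicit gluing of locally constant functions, with all parameters living in the standard countable model and compactness invoked only for the infinite path at the end. What each approach buys: the paper's criterion lemma isolates a reusable combinatorial principle and keeps a single-parameter group-theoretic formula, at the cost of the double-centralizer computation and two compactness steps; your construction is self-contained, avoids Lemma \ref{lem:Criterion for TP2} entirely, makes the cross-cutting pattern and the inconsistency number completely explicit, and handles $B^{-}\left(F\right)$ verbatim by confining the construction to a clopen set avoiding $x_{0}$ (where the paper only remarks that the proof is similar).
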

For the proof we will need the following simple criterion for having
$\TPT$.
\begin{lem}
\label{lem:Criterion for TP2}Suppose that $A$ is some infinite set
in $\C$ and $\varphi\left(x,y\right)$ is a formula such that for
some $k<\omega$ , for every sequence $\sequence{A_{i}}{i<\omega}$
of pairwise disjoint subsets of $A$, there are $\sequence{b_{i}}{i<\omega}$
such that $A_{i}\subseteq\varphi\left(\C,b_{i}\right)$ and $\set{\varphi\left(x,b_{i}\right)}{i<\omega}$
is $k$-inconsistent. Then $T$ has $\TPT$. 
\end{lem}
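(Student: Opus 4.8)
The plan is to prove Lemma \ref{lem:Criterion for TP2} by directly constructing, from the hypothesis, the standard array of parameters witnessing $\TPT$. Recall that $T$ has $\TPT$ via $\varphi(x,y)$ if there is an array $\sequence{b_{i,j}}{i,j<\omega}$ and some $k<\omega$ such that each row $\set{\varphi(x,b_{i,j})}{j<\omega}$ is $k$-inconsistent, while for every function $\eta:\omega\to\omega$ the ``path'' $\set{\varphi(x,b_{i,\eta(i)})}{i<\omega}$ is consistent. So the task is to manufacture, out of the given one-dimensional partition data, a genuinely two-dimensional array.

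The key idea is to carve up $A$ into countably many countable pieces and apply the hypothesis inside each piece. First I would fix a partition $A=\bigsqcup_{i<\omega}A^{(i)}$ into infinitely many infinite subsets. Within each fixed $i$, the set $A^{(i)}$ is infinite, so I can further choose infinitely many pairwise disjoint infinite subsets $\sequence{A^{(i)}_{j}}{j<\omega}$ of $A^{(i)}$; applying the hypothesis to this sequence produces parameters $\sequence{b_{i,j}}{j<\omega}$ with $A^{(i)}_{j}\subseteq\varphi(\C,b_{i,j})$ and $\set{\varphi(x,b_{i,j})}{j<\omega}$ being $k$-inconsistent. This immediately gives the $k$-inconsistency of each row. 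For the consistency of paths, given any $\eta:\omega\to\omega$, I would select for each $i$ a single element $c_{i}\in A^{(i)}_{\eta(i)}$. Since the $A^{(i)}$ are pairwise disjoint and the $A^{(i)}_{j}$ are disjoint within each piece, the witnesses along a path live in disjoint subsets of $A$, so there is no clash; and since $c_{i}\in A^{(i)}_{\eta(i)}\subseteq\varphi(\C,b_{i,\eta(i)})$ we get $\C\models\varphi(c_{i},b_{i,\eta(i)})$ for every $i$. Thus the path $\set{\varphi(x,b_{i,\eta(i)})}{i<\omega}$ is finitely satisfiable and hence consistent.

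Strictly speaking the path-consistency argument as just described only produces, for each finite subset of a path, a witness in $A$, which by compactness suffices for consistency of the full type; one must be a little careful that a single element of $A$ cannot serve two different conjuncts along the path, but this is exactly guaranteed by the disjointness of the blocks. Having produced the array $\sequence{b_{i,j}}{i,j<\omega}$ with all rows $k$-inconsistent and all paths consistent, I would conclude by quoting the definition of $\TPT$ (equivalently, invoke compactness to extend to a full indiscernible-free array if the definition in use requires it, though the bare array already witnesses the tree property of the second kind).

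I expect the only real subtlety to be organizational rather than mathematical: keeping the two-level disjointness bookkeeping straight so that rows genuinely remain $k$-inconsistent while paths genuinely remain consistent. There is no serious obstacle, since the single uniform bound $k$ is handed to us by the hypothesis and applies simultaneously in every block; the whole proof is essentially an unwinding of definitions combined with a diagonal/partition construction, and the hardest part is simply verifying that the chosen witnesses $c_{i}$ along a path never collide, which the disjointness of the $A^{(i)}_{j}$ secures.
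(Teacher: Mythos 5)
Your rows are fine, but the path-consistency step is a genuine gap, and your particular decomposition makes it unfixable as written. Consistency of a path $\set{\varphi\left(x,b_{i,\eta\left(i\right)}\right)}{i<\omega}$ means that a \emph{single} element must realize each finite conjunction $\bigwedge_{i<n}\varphi\left(x,b_{i,\eta\left(i\right)}\right)$; exhibiting, for each $i$ separately, some $c_{i}\in A^{\left(i\right)}_{\eta\left(i\right)}$ with $c_{i}\models\varphi\left(x,b_{i,\eta\left(i\right)}\right)$ only shows each formula in the path is individually satisfiable. Your remark that ``a single element of $A$ cannot serve two different conjuncts along the path'' has the logic exactly backwards: a common witness for the conjuncts is precisely what is needed, and your two-level partition rules one out inside $A$, since $A^{\left(i\right)}_{j}\subseteq A^{\left(i\right)}$ and the blocks $A^{\left(i\right)}$ are pairwise disjoint, so no element of $A$ lies in cells from two distinct rows. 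The hypothesis only guarantees $A^{\left(i\right)}_{j}\subseteq\varphi\left(\C,b_{i,j}\right)$ and gives you no control over overlaps across rows: the $b_{i,j}$ could perfectly well be returned with $\varphi\left(\C,b_{i,j}\right)\cap\varphi\left(\C,b_{i',j'}\right)=\emptyset$ whenever $i\neq i'$, in which case every path through your array is $2$-inconsistent and the array witnesses nothing.

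The paper's proof arranges the opposite geometry: disjointness \emph{within} each row, but maximal overlap \emph{across} rows. It enumerates $A$ as $\sequence{a_{s}}{s\in\omega^{\omega}\wedge\left|\supp\left(s\right)\right|<\omega}$ and sets $A_{i,j}=\set{a_{s}}{s\left(i\right)=j}$. For fixed $i$ the sets $\sequence{A_{i,j}}{j<\omega}$ are pairwise disjoint (indeed they partition $A$, rather than merely a block of it), so the hypothesis applies row by row and yields the $k$-inconsistent rows just as in your argument; but now a single element $a_{s}$ lies in $A_{i,s\left(i\right)}$ for \emph{every} $i$ simultaneously. Hence, given $\eta:\omega\to\omega$ and $n<\omega$, the element $a_{s}$ with $s\left(i\right)=\eta\left(i\right)$ for $i<n$ and $s\left(i\right)=0$ otherwise realizes $\bigwedge_{i<n}\varphi\left(x,b_{i,\eta\left(i\right)}\right)$, and compactness then gives consistency of the full path. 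The repair to your proof is therefore to replace the disjoint blocks $A^{\left(i\right)}$ by this function-indexed family, so that each row's cells cut across all of $A$ and finite path-witnesses exist inside $A$ by construction.
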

\begin{proof}
We may enumerate $A$ as $\sequence{a_{s}}{s\in\omega^{\omega}\wedge\left|\supp\left(s\right)\right|<\omega}$,
where $\supp\left(s\right)=\set{i\in\omega}{s\left(i\right)\neq0}$.
Let $A_{i,j}=\set{a_{s}}{s\left(i\right)=j}$. Then for each $i<\omega$,
$\set{A_{i,j}}{j<\omega}$ are mutually disjoint. By assumption we
can find $b_{i,j}$ for $i,j<\omega$ such that $\set{\varphi\left(x,b_{i,j}\right)}{j<\omega}$
are $k$-inconsistent and $A_{i,j}\subseteq\varphi_{i,j}\left(\C,b_{i,j}\right)$.
Then $\sequence{\varphi\left(x,b_{i,j}\right)}{i,j<\omega}$ witness
the tree property of the second kind. 
\end{proof}

\begin{proof}
We do the proof for $B\left(F\right)$. The proof for $B^{-}\left(F\right)$
is similar. 

Let $\varphi\left(x,y\right)$ be the formula $x\neq e$ and $x\in C\left(C\left(y\right)\right)$. 

Fix some $g\in F$, $g\neq e_{F}$. Suppose that $s\subseteq2^{\omega}$
is a clopen subset, and let $f_{s}\in B\left(F\right)$ be such that
$f_{s}\upharpoonright s$ is constantly $g$ and $f\upharpoonright2^{\omega}\backslash s$
is constantly $e_{F}$. Then $C\left(f_{s}\right)$ contains (in fact
equals) all functions $f'$ such that $f'\left(s\right)\subseteq C_{F}\left(g\right)$
(so outside of $s$ there are no restrictions on $f'$). Hence if
$f'\in C\left(C\left(f_{s}\right)\right)$ then $f'\upharpoonright2^{\omega}\backslash s$
is constantly $e_{F}$. 

It follows that if $s_{1}\cap s_{2}=\emptyset$ are two clopen subsets,
then $C\left(C\left(f_{s_{1}}\right)\right)\cap C\left(C\left(f_{s_{2}}\right)\right)=\left\{ e\right\} $. 

On the other hand, if $s_{1}\subseteq s_{2}$, then $f_{s_{1}}\in C\left(C\left(f_{s_{2}}\right)\right)$. 

Fix a sequence of pairwise disjoint clopen sets $\sequence{s_{i}}{i<\omega}$.
Then we see that for any choice of finite pairwise disjoint subsets
$A_{n}$, $n<\omega$ such that $A_{n}$ are finite, $f_{n}=f_{\bigcup\set{s_{i}}{i\in A_{n}}}$
satisfies $\set{\varphi\left(x,f_{n}\right)}{n<\omega}$ is $2$-inconsistent
but $f_{s_{i}}\models\varphi\left(x,f_{n}\right)$ if $i\in A_{n}$.
By compactness, we get such $f_{n}$'s for every choice of pairwise
disjoint subsets $A_{n}$ for $n<\omega$ (not necessarily finite).
By Lemma \ref{lem:Criterion for TP2} we are done. 
\end{proof}

\subsection{\label{subsec:C(A) abelian for finite A}A theorem on NIP groups}

We end this section with a general remark on NIP groups (without any
other assumptions).
\begin{thm}
\label{thm:always infinite abelian group}Suppose that $\left(G,\cdot\right)$
is an NIP group (or more generally, a type-definable group in an NIP
theory). Then there is some finite set $A$ such that $C\left(A\right)$
is abelian.
\end{thm}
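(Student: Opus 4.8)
The plan is to produce the desired finite set as a finite sub-centralizer realizing a maximal abelian subgroup. First I would invoke Zorn's lemma to fix a \emph{maximal} abelian subgroup $H\leq G$: the union of a chain of abelian subgroups is abelian, so maximal ones exist, and this works verbatim when $G$ is merely type-definable, treating $H$ as an abstract subgroup. The key elementary observation is that such an $H$ is self-centralizing, i.e.\ $C(H)=H$. Indeed $H\subseteq C(H)$ since $H$ is abelian, and conversely if $x\in C(H)$ then $\langle H,x\rangle$ is again abelian and contains $H$, so by maximality $x\in H$; hence $C(H)\subseteq H$. Thus $H$ is an abelian, self-centralizing subgroup, and the theorem will follow once $H$ is exhibited as the centralizer of a \emph{finite} set.

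To pass from the a priori infinite intersection $C(H)=\bigcap_{h\in H}C(h)$ to a finite one, I would apply the Baldwin--Saxl lemma (Fact \ref{fact:BaldwinSaxl}) to the formula $\varphi(x,y)$ given by $xy=yx$, whose instances $\varphi(x,h)$ define exactly the centralizers $C(h)$ and so form a uniformly definable family of subgroups. Baldwin--Saxl yields a number $n$, depending only on $\varphi$, bounding the number of centralizers needed in any finite intersection. The aim is to use this bound to find $A\subseteq H$ with $|A|\leq n$ and $\bigcap_{a\in A}C(a)=\bigcap_{h\in H}C(h)=H$; then $C(A)=H$ is abelian and is the centralizer of a finite set, as required.

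The main obstacle is precisely this collapse of the infinite intersection to a finite sub-intersection: Baldwin--Saxl as stated governs finite intersections, so I must rule out an infinite strictly descending chain $C(b_1)\supsetneq C(b_1)\cap C(b_2)\supsetneq\cdots$ with the $b_i\in H$. This is where NIP enters essentially. Assuming such a chain, at stage $i$ there is a witness $c_i\in\bigcap_{j<i}C(b_j)\setminus C(b_i)$, so $c_i$ commutes with $b_1,\dots,b_{i-1}$ but not with $b_i$. Extracting an indiscernible sequence $\langle(b_i,c_i):i<\omega\rangle$ and using that the $b_i$ pairwise commute (as elements of the abelian $H$), I would form products $\prod_{i\in F}c_i$ and check that their commuting pattern against $\{b_j\}$ recovers arbitrary finite subsets, so that $\varphi(x,y)$ has IP, contradicting NIP. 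I expect the delicate point to be the bookkeeping in this last step, in particular the sub-case in which the indiscernible commuting pattern degenerates to a mere order pattern and needs separate handling; this is the crux that forces the chain to stabilize and thereby delivers the finite set $A$.
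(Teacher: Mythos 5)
Your reduction to a maximal abelian subgroup is fine as far as it goes: Zorn's lemma applies, $C(H)=H$ is correct, and \emph{if} you could find a finite $A\subseteq H$ with $C(A)=C(H)$ you would indeed be done. The genuine gap is the step that produces this finite $A$, and it is not a bookkeeping issue. Baldwin--Saxl (Fact \ref{fact:BaldwinSaxl}) only says that every \emph{finite} intersection of centralizers equals a sub-intersection of $n$ of them; it does not give any descending chain condition, since the $n$-element support may change along the chain. (In stable groups the analogous statement does yield the chain condition $M_c$ on centralizers, and there your argument would work; in NIP groups $M_c$ is not available, and what you are trying to prove is exactly a restricted form of it.) Your IP-extraction from a strictly descending chain $C(b_1)\supsetneq C(b_1)\cap C(b_2)\supsetneq\cdots$ with $b_i\in H$ only succeeds in one of the two indiscernible patterns. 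After extracting an indiscernible sequence $\langle(b_i,c_i)\rangle$, the truth value of ``$c_i$ commutes with $b_j$'' for $j>i$ is constant: if it is \emph{true}, you get the equality pattern $[c_i,b_j]=1\iff j\neq i$, and there your product trick is sound --- for $c_F=\prod_{i\in F}c_i$, conjugation by $b_j$ fixes every factor except possibly $c_j$, so by cancellation $[c_F,b_j]\neq1$ iff $j\in F$, giving IP. But if it is \emph{false}, you get the half-graph pattern $[c_i,b_j]=1\iff j<i$, and the product trick collapses: conjugating $c_F$ by $b_j$ now moves \emph{all} factors $c_i$ with $i\leq j$ simultaneously, cancellation gives you nothing, and the commuting pattern of the $c_F$ against the $b_j$ is not determined. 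What remains is at best the order property for the formula $xy=yx$, which is perfectly consistent with NIP (NIP theories can be unstable). So the ``crux'' you flag is not a sub-case needing care; it is an unproven statement at least as hard as the theorem itself, and plausibly false --- note that your strategy, if completed, would show that \emph{every} maximal abelian subgroup of an NIP group is the centralizer of a finite set (in particular definable), a much stronger conclusion than the theorem, which only asserts the existence of \emph{some} finite $A$ with $C(A)$ abelian. Candidate configurations realizing the half-graph commuting pattern with pairwise commuting $b_i$ (e.g.\ graph products of abelian groups over a half-graph) would have to be excluded, and nothing in your sketch does so.

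The paper's proof avoids chain conditions entirely, which is worth internalizing. It first splits off the degenerate case where $C(B)$ is finite for some finite $B$ (then $A=C(B)\cup B$ works). Otherwise all centralizers of finite sets are infinite, so the partial type $p_0=\{x\in C(A): A\subseteq G(M)\text{ finite}\}$ is finitely satisfiable in a saturated model $M$; one then looks at the set $S$ of global types extending $p_0$ finitely satisfiable in $G(M)$, shows $p(x)\otimes q(y)\models x\cdot y=y\cdot x$ for $p,q\in S$ using finite satisfiability, and invokes the NIP symmetry lemma to conclude that \emph{any} realizations of $p$ and $q$ commute. Compactness then extracts a single definable set $\chi$ containing $S$ on which all pairs of elements of $G$ commute, and a pigeonhole/finite-satisfiability argument produces a finite $A\subseteq G(M)$ with $C(A)(M)\subseteq\chi(M)$, hence $C(A)$ abelian by elementarity. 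The selected finite $A$ falls out of compactness at the end, with no claim that $C(A)$ equals any prescribed maximal abelian subgroup --- precisely the flexibility your approach forgoes.
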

\begin{proof}
Assume that $G$ is a type-definable group, defined by the type $\pi\left(x\right)$
(the multiplication $\cdot_{G}$ and the unit $e_{G}$ are definable).

Let $M$ be any $\left|\pi\right|^{+}$-saturated model, so that $G\left(M\right)\prec G\left(\C\right)$
by the Tarski-Vaught test.

Let $p_{0}$ be a partial type containing the formulas $x\in C\left(A\right)$
for all finite sets $A$ of $G\left(M\right)$. The partial type $p_{0}$
is finitely satisfiable in $G\left(M\right)$ (witnessed by $e_{G}$).
Let $S$ be the set of all global types in $S_{G}\left(\C\right)$
containing $p_{0}$ and f.s. in $G\left(M\right)$. All of these types
are in particular invariant over $M$, so their product is well defined.
(For the precise definition of a product of global invariant types,
see \cite[2.2.1]{pierrebook}, but one can understand it from the
proof.)

\begin{claim*}
For $p,q\in S$, $p\left(x\right)\otimes q\left(y\right)\models x\cdot y=y\cdot x$. 
\end{claim*}
\begin{proof}
We need to show that if $N\supseteq M$, $a\models q|_{N}$, $b\models p|_{Na}$
then $a\cdot b=b\cdot a$. If not, then $b\notin C\left(a\right)$,
so for some $b_{0}\in G\left(M\right)$, $b_{0}\notin C\left(a\right)$,
so $a\notin C\left(b_{0}\right)$ \textemdash{} contradiction.
\end{proof}
By \cite[Lemma 2.26]{pierrebook}, it follows that for any $a\models p,b\models q$,
$a\cdot b=b\cdot a$ (the proof there works just fine for type-definable
groups, because it only uses that the formula for multiplication is
NIP, but multiplication is definable). 

By compactness for every $p\left(x\right),q\left(y\right)\in S$ there
are formulas $\psi_{p,q}\left(x\right)\in p,\varphi_{p,q}\left(y\right)\in q$
such that for every $a\models\psi_{p,q}$, $b\models\varphi_{p,q}$
in $G$, $a\cdot b=b\cdot a$. Fix $p$. By compactness (as $S$ is
closed), there is a finite set of types $q_{i}\in S$ for $i<n$ such
that $\varphi_{p}=\bigvee_{i<n}\varphi_{p,q_{i}}$ contains $S$.
Let $\psi_{p}=\bigwedge_{i<n}\psi_{p,q_{i}}$. Again by compactness
there are $p_{i}$ for $i<m$ such that $\bigvee_{i<m}\psi_{p_{i}}$
contains $S$. Let $\chi=\left(\bigwedge_{i<m}\varphi_{p_{i}}\right)\wedge\left(\bigvee_{i<m}\psi_{p_{i}}\right)$,
then $\chi$ contains $S$ and for every $a,b\models\chi$ in $G\left(\C\right)$,
$a\cdot b=b\cdot a$. (This is the same as in the proof of \cite[Proposition 2.27]{pierrebook}.)

It cannot be that for all finite $A\subseteq G\left(M\right)$, $\neg\chi\left(M\right)\cap C\left(A\right)\neq\emptyset$
(otherwise we can define a type, f.s. in $M$, containing $p_{0}$,
so in $S$, but not satisfying $\chi$). Hence there is some finite
$A\subseteq G\left(M\right)$ such that $C\left(A\right)\left(M\right)\models\chi$.
Hence $C\left(A\right)\left(M\right)$ is abelian, but as $G\left(M\right)\prec G\left(\C\right)$,
so is $C\left(A\right)$.  
\end{proof}
\begin{rem}
When the group $G$ is an $\omega$-categorical characteristically
simple NIP group, then by Proposition \ref{prop:B(F) is TP2}, and
the remark before it (or just \cite[Fact 0.1 and Proposition 3.2]{MR2898712}),
Krupinski's proof of \cite[Proposition 3.1]{MR2898712} gives us that
for any finite set $A$, $C\left(A\right)$ is infinite. Together
with Theorem \ref{thm:always infinite abelian group}, we know that
we can find some $A$ such that $C\left(A\right)$ is abelian and
infinite. 
\end{rem}

\section{\label{sec:Boundedly-many-global}$f$-generic is the same as strongly
$f$-generic}

Assume that $G$ is definably-amenable and dp-minimal. The main theorem
here says that any definable $X\subseteq G$ which divides over a
small model also $G$-divides. This means that that are at most boundedly
many global $f$-generic types and a global type is $f$-generic iff
it is strongly $f$-generic (see Corollaries \ref{cor:f-generic implies strongly f-generic}
and \ref{cor:boundedly many types}). 

Let us first recall the definitions. Throughout we assume $T$ is
NIP, and we work in a monster model $\C$.
\begin{defn}
A definable group $G$ is definably amenable if it admits a $G$-invariant
Keisler measure on its definable subsets. 
\end{defn}
A \emph{Keisler measure} is a finitely additive probability measure
on definable subsets of $G$. We will not use this definition, so
there is no need for us to get too deeply into Keisler measures. Instead
we will use the following characterization from \cite{ChernikovSimonDA2015}
given in terms of $G$-dividing. 
\begin{defn}
For $X\subseteq G$ definable, we say that it $G$-divides if there
is an indiscernible sequence $\sequence{g_{i}}{i<\omega}$ of elements
from $G$ over the parameters defining $X$ such that $\set{g_{i}X}{i<\omega}$
is inconsistent (equivalently, remove the indiscernibility assumption
and replace it with $k$-inconsistency). Similarly, we say that $X$
\emph{right-$G$-divides} if there is a sequence as above such that
$\sequence{Xg_{i}}{i<\omega}$ is inconsistent. 
\end{defn}
\begin{fact}
\label{fact:ideal of G-dividing}\cite[Corollary 3.5]{ChernikovSimonDA2015}Let
$G$ be a group definable in an NIP theory. Then if $G$ is definably
amenable then the family of $G$-dividing subsets of $G$ forms an
ideal. Hence in this case any non-$G$-dividing partial type can be
extended to a global one. 
\end{fact}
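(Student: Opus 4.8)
The plan is to reduce the whole statement to a single \emph{measure characterization} and then treat its two directions separately. Recall that the $G$-dividing definable subsets form an ideal once we know two facts: (i) the family is closed downward, i.e.\ if $Y$ $G$-divides and $X\subseteq Y$ is definable then $X$ $G$-divides; and (ii) it is closed under finite unions. Property (i) is immediate using the equivalent ``$k$-inconsistency'' form of the definition: a sequence $\sequence{g_i}{i<\omega}$ with $\set{g_iY}{i<\omega}$ $k$-inconsistent makes $\set{g_iX}{i<\omega}$ $k$-inconsistent as well, since $g_iX\subseteq g_iY$. The content is (ii). Granting the ideal, the final ``Hence'' is routine: a partial type no finite conjunction of which $G$-divides extends, one formula at a time, to a complete global type with the same property, because for any $\varphi$ one of $p\wedge\varphi$, $p\wedge\neg\varphi$ must stay outside the ideal (else $p$ itself would $G$-divide by (ii)), and Zorn's lemma finishes.

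To prove (ii) I would pass through the following bridge, which is where definable amenability enters: a definable $X\subseteq G$ does \emph{not} $G$-divide if and only if there is a $G$-invariant Keisler measure $\mu$ with $\mu(X)>0$. Granting this, (ii) is immediate by the contrapositive: if $X_1\cup X_2$ does not $G$-divide, pick an invariant $\mu$ with $\mu(X_1\cup X_2)>0$; by finite subadditivity $\mu(X_1)+\mu(X_2)>0$, so one of $X_1,X_2$ receives positive measure and hence does not $G$-divide. Thus $X_1,X_2$ both $G$-dividing forces $X_1\cup X_2$ to $G$-divide.

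The easy direction of the bridge is a counting argument, and this is where invariance bites. Suppose $\mu$ is $G$-invariant with $\mu(X)=\varepsilon>0$, and toward a contradiction that $\sequence{g_i}{i<\omega}$ witnesses $G$-dividing with $\set{g_iX}{i<\omega}$ $k$-inconsistent. By invariance every translate has $\mu(g_iX)=\varepsilon$, while $k$-inconsistency means no point lies in $k$ of them, so for each $N$ finite additivity gives $N\varepsilon=\sum_{i<N}\mu(g_iX)\le k-1$. This forces $N\le(k-1)/\varepsilon$, contradicting that the family is infinite. Hence positive invariant measure precludes $G$-dividing, which is exactly the implication used above.

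The hard direction --- that if \emph{every} $G$-invariant Keisler measure vanishes on $Y$ then $Y$ $G$-divides --- is the main obstacle, and the only place the full strength of NIP (beyond the mere existence of one invariant measure) is needed. Definable amenability supplies a nonempty, convex, weak$^{*}$-compact space $\mathfrak M$ of invariant measures, on which $\mu\mapsto\mu(Y)$ is affine and continuous, so $v:=\sup\set{\mu(Y)}{\mu\in\mathfrak M}$ is attained, and I must show that $v=0$ forces $Y$ to $G$-divide. The plan is a packing/covering duality dual to the counting argument: from $v=0$ one wants arbitrarily large finite families of translates of $Y$ that are $k$-inconsistent, which Ramsey and compactness then upgrade to an indiscernible witness. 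The naive greedy duality only yields a maximal $k$-inconsistent family of translates whose existence makes a convolution such as $YY^{-1}$ generic rather than charging $Y$ itself, so the real work --- carried out in \cite{ChernikovSimonDA2015} --- is to convert the fractional data coming from measures into genuine $k$-inconsistent packings of $Y$, and this is precisely where the NIP of the family $\set{gY}{g\in G}$ is exploited, through the VC theorem and the model-theoretic $(p,q)$-theorem. I expect this conversion to be the hardest and most technical step.
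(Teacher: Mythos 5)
The paper offers no proof of this statement at all: it is imported verbatim as \cite[Corollary 3.5]{ChernikovSimonDA2015}, so there is no internal argument to compare against. Judged on its own terms, your proposal is a correct \emph{derivation} of the corollary from the measure characterization of $G$-dividing, and that characterization is indeed where the content lives in the cited reference (their Proposition 3.4: a definable $X\subseteq G$ does not $G$-divide iff some $G$-invariant Keisler measure gives $X$ positive mass iff $X$ lies in a global $f$-generic type). Your individual steps check out: downward closure is correctly handled via the $k$-inconsistency form of the definition (and you were right to use that form rather than the indiscernible one, since the parameters of $X$ and $Y$ may differ, so a sequence indiscernible over the parameters of $Y$ need not be indiscernible over those of $X$); the counting argument for the easy direction is the standard one and is correct (each point lies in at most $k-1$ of the translates, so $\sum_{i<N}\mu\left(g_{i}X\right)\leq k-1$ while invariance gives the sum value $N\varepsilon$); and the Zorn's lemma step deducing the ``Hence'' clause from the ideal property --- for each $\varphi$, one of $p\wedge\varphi$, $p\wedge\neg\varphi$ stays outside the ideal --- is routine and correctly executed.

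What you have not done is prove the hard implication, that a set null for every invariant measure must $G$-divide, and you say so explicitly, outsourcing it to \cite{ChernikovSimonDA2015}. So the proposal is a reduction of Corollary 3.5 of that paper to (essentially) Proposition 3.4 of the same paper, not a self-contained proof. Since the present paper also treats the statement as a black box, this puts you on par with the authors; but be aware that essentially all of the substance of the Fact is concentrated in the step you skipped. Your guess at its mechanism (VC-theorem/$(p,q)$-style conversion of fractional data into packings) is a reasonable description of the kind of NIP input required, but in the reference the argument is intertwined with forking over small models and with the identification of $f$-generic global types with $G^{00}$-invariant ones --- the same statement this paper later quotes as a separate Fact --- rather than being a direct packing/covering duality; so treat that final paragraph of yours as informed speculation rather than an outline one could actually execute.
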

As an example which relates to the previous section, we note that
any countable $\omega$-categorical group is locally finite and hence
it is amenable by \cite[Example 1.2.13]{MR1874893} and so any group
elementarily equivalent to it is definably amenable \cite[Example 8.13]{pierrebook}.
\begin{defn}
A global type is called\emph{ $f$-generic} if it contains no $G$-dividing
formula. 
\end{defn}
\begin{rem}
\cite[Proposition 3.4]{ChernikovSimonDA2015}If $G$ is definably
amenable, then $p$ is $f$-generic iff all its formula are $f$-generic,
which means that for every $\varphi\in p$, no translate of $\varphi$
forks over $M$ where $M$ is some small model containing the parameters
of $\varphi$. It is also proved there that a formula is $f$-generic
iff it does not $G$-divide, so we will use these terms interchangeably.
Similarly, we will write right-$f$-generic for non-right-$G$-dividing. 
\end{rem}
\begin{fact}
\label{fact:f-generic =00003D G00 invariant}\cite[Proposition 3.9]{ChernikovSimonDA2015}When
$G$ is definably amenable then a global type is $f$-generic type
iff it is $G^{00}$-invariant. 
\end{fact}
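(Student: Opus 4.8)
The statement is an equivalence, so I would prove the two implications separately. Throughout, for $g\in G$ and a global type $p\in S_{G}\left(\C\right)$ write $g\cdot p$ for the left translate, determined by $\psi\left(x\right)\in g\cdot p$ iff $\psi\left(g y\right)\in p\left(y\right)$, and let $\operatorname{Stab}\left(p\right)=\set{g\in G}{g\cdot p=p}$, a subgroup of $G$. Saying that $p$ is $G^{00}$-invariant is exactly saying $G^{00}\subseteq\operatorname{Stab}\left(p\right)$. The implication ``$G^{00}$-invariant $\Rightarrow$ $f$-generic'' is soft and uses only that $G^{00}$ exists and has bounded index (i.e. NIP); the converse is where definable amenability is really needed.

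For the easy direction, suppose $p$ is $G^{00}$-invariant and, towards a contradiction, that some $\varphi\in p$ $G$-divides, witnessed by an indiscernible sequence $\sequence{g_{i}}{i<\omega}$ and $k<\omega$ with $\set{\varphi\left(g_{i}^{-1}x\right)}{i<\omega}$ being $k$-inconsistent. Note $\varphi\left(g_{i}^{-1}x\right)\in g_{i}\cdot p$, and since $G^{00}$ is normal, $G^{00}$-invariance gives that $g_{i}\cdot p$ depends only on the coset $g_{i}G^{00}$. As there are only boundedly many such cosets but infinitely many indices, at least $k$ of the $g_{i}$ lie in a single coset, so the corresponding translates $g_{i}\cdot p$ are all equal to one type $q$. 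Then $\set{\varphi\left(g_{i}^{-1}x\right)}{i\in I}\subseteq q$ for some $I$ with $\left|I\right|=k$, so these $k$ formulas are jointly consistent --- contradicting $k$-inconsistency.

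For the converse, suppose $p$ is $f$-generic and let $g\in G^{00}$; I want $g\cdot p=p$. If not, then some $\varphi\left(y\right)\in p$ has $\neg\varphi\left(g y\right)\in p$ as well, so the formula $\psi\left(y\right)=\varphi\left(y\right)\wedge\neg\varphi\left(g y\right)$ lies in $p$. Its solution set is contained in $X\triangle g^{-1}X$, where $X=\varphi\left(\C\right)$ and $g^{-1}\in G^{00}$. The plan is to establish the key step: for any definable $X$ and any $h\in G^{00}$, the symmetric difference $X\triangle hX$ $G$-divides. Granting this, $\psi$ defines a subset of a $G$-dividing set, hence $G$-divides itself, because by Fact \ref{fact:ideal of G-dividing} the $G$-dividing definable sets form an ideal and so are closed downwards; this contradicts $f$-genericity of $p$ and finishes the proof.

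The main obstacle is exactly this key step. The intuition (clear in, say, an o-minimal group, where $X\triangle hX$ is an ``infinitesimally thin'' boundary) is that because $h\in G^{00}$ the set $X\triangle hX$ is negligible, so one can find an indiscernible sequence $\sequence{h_{i}}{i<\omega}$ of translations with $\set{h_{i}\left(X\triangle hX\right)}{i<\omega}$ pairwise disjoint, hence $2$-inconsistent, which is precisely $G$-dividing. Making ``negligible'' and ``one can pack disjoint translates'' precise in a general definably amenable NIP group is the technical heart of the argument: here one uses a left-invariant Keisler measure (supplied by definable amenability) together with the structure of the compact quotient $G/G^{00}$ to show that every $G^{00}$-boundary is null for every invariant measure, equivalently that it $G$-divides. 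It is at this point, and not in the easy direction, that definable amenability (rather than mere NIP) is indispensable.
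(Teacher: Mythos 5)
First, note that the paper does not prove this statement at all: it is quoted as a Fact with a citation to \cite[Proposition 3.9]{ChernikovSimonDA2015}, so the comparison can only be against what a complete proof would require. Your soft direction ($G^{00}$-invariance implies $f$-genericity) is the standard argument and is correct in outline, but it has a small slip as written: $G^{00}$ has \emph{bounded} index, which in general means up to $2^{\left|T\right|}$ cosets, so with only $\omega$ many indices $g_{i}$ the pigeonhole ``at least $k$ of the $g_{i}$ lie in a single coset'' does not follow. The fix is routine: stretch the indiscernible witness $\sequence{g_{i}}{i<\omega}$ by compactness to a sequence of length $\left(2^{\left|T\right|}\right)^{+}$ (the $k$-inconsistency of $\set{g_{i}X}{i<\omega}$ is part of the EM-type, hence preserved), and then pigeonhole on cosets of $G^{00}$ as you do. With that repair, and your correct observation that $g\cdot p$ depends only on $gG^{00}$, this direction is fine.

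The genuine gap is in the hard direction. You correctly reduce ``$f$-generic implies $G^{00}$-invariant'' to the key claim that $X\mathrela{\triangle}hX$ $G$-divides for every definable $X$ and $h\in G^{00}$ (and the downward closure via Fact \ref{fact:ideal of G-dividing} is legitimate), but you then assert the key claim rather than prove it. The two ingredients you gesture at are precisely the nontrivial content of Chernikov--Simon's result: (i) the equivalence ``$X$ $G$-divides iff $\mu\left(X\right)=0$ for every $G$-invariant Keisler measure $\mu$'' has one easy direction (positive measure prevents $k$-inconsistent translates, by finite additivity), but the direction you actually need --- every non-$G$-dividing set has positive measure under \emph{some} invariant measure --- is a substantial theorem resting on NIP/VC-theorem machinery; and (ii) the statement ``every $G^{00}$-boundary $X\mathrela{\triangle}hX$ is null for every invariant measure'' is itself essentially equivalent to the fact being proved and needs an argument (for instance via subadditivity of $g\mapsto\mu\left(X\mathrela{\triangle}gX\right)$ and showing the associated stabilizer is a type-definable subgroup of bounded index, which in turn uses Borel-definability of measures under NIP). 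Your appeal to intuition about ``infinitesimally thin boundaries'' and ``packing disjoint translates'' is a heuristic, not a proof, and it is exactly at this point that the argument would have to do real work. As it stands, the proposal establishes the soft implication and restates, rather than proves, the hard one.
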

\begin{thm}
\label{thm:bddly many f-generics}Suppose that $G$ is dp-minimal
and definably amenable.  Then if $\varphi\left(x,c\right)$ forks
over a small model $M$, then it $G$-divides. 
\end{thm}
\begin{proof}
Suppose not.

By assumption (and as forking equals dividing over models, see \cite{Kachernikov}),
there is an $M$-indiscernible sequence $\sequence{c_{j}}{j<\omega}$
such that $\sequence{\varphi\left(x,c_{j}\right)}{j<\omega}$ is inconsistent. 

However, $\varphi\left(x,c_{j}\right)$ is still $f$-generic. 

We now divide into two cases: either there is a formula $\psi_{0}\left(x,b\right)$
which is right-$f$-generic but not $f$-generic (call this case 0),
or not (case 1). In case 0, let $\zeta_{0}\left(x,y,b\right)=\psi_{0}\left(y^{-1}x,b\right)$.

Note that if case 0 does not occur, then every $G$-dividing formula
also right-$G$-divides. As $X$ is $f$-generic iff $X^{-1}$ is
right-$f$-generic, this means that every right-$G$-dividing formula
also $G$-divides. 

In case 1, choose a formula $\psi_{1}\left(x,b\right)$ for which,
for every formula $\chi\left(x\right)\supseteq G^{00}$ (with no parameters)
both $\psi_{1}\left(x,b\right)\wedge\chi\left(x\right)$ and $\neg\psi_{1}\left(x,b\right)\wedge\chi\left(x\right)$
are $f$-generic (such a formula exists, as otherwise there is a unique
non-$G$-dividing type concentrating on $G^{00}$, so the number of
$f$-generic types is bounded, but by assumption there are unboundedly
many). Let $\zeta_{1}\left(x,y,z,b\right)=\psi_{1}\left(y^{-1}x,b\right)\wedge\neg\psi_{1}\left(z^{-1}x,b\right).$
We may assume that $\sequence{c_{j}}{i<\omega}$ is indiscernible
over $Mb$. 

Depending on the case, let $\zeta\left(x,y,z,b\right)$ be either
$\zeta_{0}$ or $\zeta_{1}$ (so $z$ might be redundant). Construct
a sequence $\sequence{I_{i},g_{i},h_{i}}{i<\omega}$ such that:

\begin{itemize}
\item In case 0, $g_{i}\in G$. In case 1, $g_{i},h_{i}\in G^{00}$. 
\item $I_{i}$ is indiscernible, $I_{i}=\sequence{e_{i,j}}{j<\omega}$,
$e_{i,j}\models\varphi\left(x,c_{j}\right)$ for all $i,j<\omega$.
\item $e_{i,j}\models\zeta\left(x,g_{i},h_{i},b\right)$ for all $i,j<\omega$.
\item $e_{i',j}\not\models\zeta\left(x,g_{i},h_{i},b\right)$ for all $i',i,j<\omega$
whenever $i'>i$. 
\end{itemize}
(In case 0, we only need $g_{i}$.) How? 

Note that for any $g,h\in G^{00}$, $\zeta\left(x,g,h,b\right)$ does
$G$-divide by Fact \ref{fact:f-generic =00003D G00 invariant} (this
is trivially true in case 0). 

By compactness it is enough to construct such a sequence for $i<n$.
Suppose we have $\sequence{I_{i},g_{i},h_{i}}{i<n}$. Let $\xi\left(x\right)=\bigvee_{i<n}\zeta\left(x,g_{i},h_{i},b\right)$.
Then $\xi\left(x\right)$ does $G$-divide by Fact \ref{fact:ideal of G-dividing}.
Hence $\varphi\left(x,c_{j}\right)\backslash\xi\left(x\right)$ is
not empty for all $j$, and hence we may find a sequence $e_{n,j}\models\varphi\left(x,c_{j}\right)\backslash\xi\left(x\right)$.
Consider the sequence $I=\sequence{\left(e_{0,j},\ldots,e_{n,j},c_{j}\right)}{j<\omega}$.
By Ramsey and compactness there is an $Mh_{<n}g_{<n}b$-indiscernible
sequence $I'$ with the same EM-type as $I$ over over $Mh_{<n}g_{<n}b$.
There is an automorphism taking $\sequence{c_{j}'}{j<\omega}$ to
$\sequence{c_{j}}{j<\omega}$ over $Mb$, and applying it we are in
the same situation as before (changing $h_{<n}g_{<n}$ and $e_{i,j}$)
but now $I_{n}=\sequence{e_{n,j}}{j<\omega}$ is indiscernible. This
takes cares of all the bullets except the third one, for which needs
to find $g_{n},h_{n}$. 

In case 0, the set $\set{\psi_{0}\left(x,b\right)\cdot e_{n,j}^{-1}}{j<\omega}$
is consistent (as $\psi_{0}\left(x,b\right)$ does not right-$G$-divide),
so contains some $g\in G$, hence $ge_{n,j}\models\psi_{0}\left(x,b\right)$,
i.e., $e_{n,j}\models g^{-1}\cdot\psi_{0}\left(x,b\right)=\zeta_{0}\left(x,g^{-1},b\right)$
so let $g_{n}=g^{-1}$. 

In case 1, the set $\set{\left(\chi\left(x\right)\wedge\psi_{1}\left(x,b\right)\right)\cdot e_{n,j}^{-1}}{G^{00}\subseteq\chi\left(x\right),j<\omega}$
is consistent (as $G$-dividing = right-$G$-dividing in this case)
so again we can find $g\in G^{00}$ realizing it, so in particular
$e_{n,j}\models g^{-1}\cdot\psi_{1}\left(x,b\right)$. Similarly,
there is some $h\in G^{00}$ such that $e_{n,j}\models h^{-1}\cdot\left(\neg\psi_{1}\left(x,b\right)\right)$.
Finally, choose $g_{n}=g^{-1}$ and $h_{n}=h^{-1}$. 

This finishes the construction. 

Now by Ramsey and compactness we may assume that $\sequence{I_{i}g_{i}h_{i}}{i<\omega}$
is indiscernible over $Mb\sequence{c_{j}}{j<\omega}$ and that $\sequence{\sequence{e_{i,j}}{i<\omega}c_{j}}{j<\omega}$
is indiscernible over $Mb\sequence{g_{i}h_{i}}{i<\omega}$. 

Let $\xi\left(x,y,y',z,z',b\right)=\zeta\left(x,y,z,b\right)\backslash\zeta\left(x,y',z',b\right)$.
Then for every $i>0$ and $j<\omega$, $e_{i_{0},j_{0}}\models\xi\left(x,g_{i},h_{i},g_{i-1},h_{i-1}\right)$
iff $i_{0}=i$ and $e_{i_{0},j_{0}}\models\varphi\left(x,c_{j}\right)$
iff $j=j_{0}$ contradicting dp-minimality. 
\end{proof}
For the next corollary, we recall that in the context of NIP, definably
amenable groups, a global type is called\emph{ strongly $f$-generic}
if it is $f$-generic and does not fork over some small model (this
is not the original definition, but see \cite[Proposition 3.10]{ChernikovSimonDA2015}. 
\begin{cor}
\label{cor:f-generic implies strongly f-generic}If $G$ is a dp-minimal
definably amenable group, then any global $f$-generic $1$-type $p$
is strongly $f$-generic. 
\end{cor}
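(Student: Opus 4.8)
The plan is to derive the corollary directly from Theorem \ref{thm:bddly many f-generics} by passing to its contrapositive and then reducing the type-level statement to a formula-level one. Recall that, by the definition recalled just before the corollary, a global type is strongly $f$-generic precisely when it is $f$-generic (which is part of the hypothesis on $p$) and it does not fork over some small model. So the only thing to produce is a single small model over which $p$ does not fork.

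First I would fix an arbitrary small model $M$ (for concreteness, one containing the parameters used to define $G$ and $G^{00}$). The essential input is Theorem \ref{thm:bddly many f-generics}: every formula $\varphi\left(x,c\right)$ that forks over $M$ must $G$-divide. Taking the contrapositive, any formula $\varphi\left(x,c\right)$ that does \emph{not} $G$-divide does not fork over $M$. Since $p$ is $f$-generic, by the very definition of $f$-generic it contains no $G$-dividing formula; hence no individual formula of $p$ forks over $M$.

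It then remains to pass from individual formulas to the whole type. Here I would use the standard fact that a complete global type forks over $M$ if and only if it contains a single formula forking over $M$: if $p\vdash\bigvee_{i<n}\chi_{i}$ with each $\chi_{i}$ dividing over $M$, then by compactness some finite conjunction $\psi$ of members of $p$ implies $\bigvee_{i<n}\chi_{i}$, so $\psi\in p$ forks over $M$. By the previous paragraph no such $\psi$ exists, so $p$ does not fork over $M$, and therefore $p$ is strongly $f$-generic, as desired.

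The argument is short because the real work is already contained in Theorem \ref{thm:bddly many f-generics}; the only points requiring any care are that the model $M$ in that theorem may be chosen once and for all (its conclusion, $G$-dividing, depends only on the parameters of the formula and not on $M$, so the implication holds for every small $M$ simultaneously) and the routine reduction of non-forking of a complete type to non-forking of its individual formulas. I do not expect a genuine obstacle beyond this bookkeeping: the substantive content lies entirely in the dp-minimality argument proving the theorem.
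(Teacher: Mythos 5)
Your proposal is correct and takes essentially the same route as the paper, whose entire proof is the single line ``Take any small model $M$. Then $p$ cannot divide over $M$.'' You have simply made explicit the two routine steps the paper leaves implicit: applying Theorem \ref{thm:bddly many f-generics} formula-by-formula to conclude no formula of $p$ forks over $M$, and the standard compactness argument showing a complete type forks over $M$ iff one of its formulas does.
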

\begin{proof}
Take any small model $M$. Then $p$ cannot divide over $M$. 
\end{proof}
\begin{rem}
Theorem \ref{thm:bddly many f-generics} does not hold for a group
definable in a dp-minimal theory. Consider $T=RCF$, and let $G=R^{2}$
where $R$ is a saturated model of $T$. Example 3.11 in \cite{ChernikovSimonDA2015}
gives a $G$-invariant type $r\left(x,y\right)$ (so does not $G$-divide)
which is not invariant over any small model $M$.
\end{rem}
\begin{cor}
\label{cor:boundedly many types}Suppose that $G$ is a dp-minimal
definably amenable group. Then there are boundedly many global $f$-generic
types.
\end{cor}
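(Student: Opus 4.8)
The plan is to fix a single small model $M$ once and for all, show that every global $f$-generic type is invariant over $M$, and then use the fact that there are boundedly many $M$-invariant global types.

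First I would fix an arbitrary small model $M$ (say of size $\left|T\right|$). The crucial input is Theorem \ref{thm:bddly many f-generics} read contrapositively: a formula $\varphi\left(x,c\right)$ that does not $G$-divide---equivalently, is $f$-generic---does not fork over $M$. Hence if $p$ is any global $f$-generic type, then by definition no formula in $p$ is $G$-dividing, so no formula in $p$ forks over $M$, and therefore $p$ itself does not fork over $M$. This is exactly the argument in the proof of Corollary \ref{cor:f-generic implies strongly f-generic}, but the point I want to stress is that one and the same $M$ works simultaneously for all $f$-generic types, since the conclusion ``does not fork over $M$'' holds for every small $M$.

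Next I would invoke the standard NIP fact (recall that dp-minimal implies NIP) that a global type which does not fork over a model $M$ is invariant over $M$, i.e.\ $\Aut\left(\C/M\right)$-invariant. Thus every global $f$-generic type belongs to the set of $M$-invariant global types. Finally I would bound the size of that set: an $M$-invariant global type $p$ is completely determined by the assignment sending each pair $\left(\varphi\left(x,y\right),q\right)$, with $\varphi$ an $L$-formula and $q\in S_{y}\left(M\right)$, to the truth value of ``$\varphi\left(x,a\right)\in p$ for some (equivalently, any) $a\models q$''---this is well defined precisely by $M$-invariance. Hence the number of such types is at most $2^{\left|T\right|\cdot\left|S\left(M\right)\right|}$, a cardinal depending only on $\left|T\right|$ and $\left|M\right|$ and not on the monster $\C$. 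This is the desired bound.

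The whole argument is really a repackaging of Theorem \ref{thm:bddly many f-generics}, so I do not expect a serious obstacle. The only two places requiring a word of care are the uniformity of $M$ (immediate, as just noted, from the fact that the theorem's conclusion holds over every small model) and the passage from non-forking over $M$ to $M$-invariance, which is where NIP is used; both are routine given what precedes.
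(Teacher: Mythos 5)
Your proof is correct and takes essentially the same route as the paper: fix a small model $M$, use Theorem \ref{thm:bddly many f-generics} (as in Corollary \ref{cor:f-generic implies strongly f-generic}) to conclude that every global $f$-generic type does not fork over $M$, invoke NIP to upgrade non-forking over a model to $M$-invariance, and note that there are boundedly many $M$-invariant global types. Your explicit cardinality bound $2^{\left|T\right|\cdot\left|S\left(M\right)\right|}$ merely spells out the ``boundedly many'' count that the paper leaves implicit.
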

\begin{proof}
Fix some small model $M$. This follows by NIP and (the proof of)
Corollary \ref{cor:f-generic implies strongly f-generic}, as there
are boundedly many global types non-forking over $M$ (by NIP they
must be invariant over $M$). 
\end{proof}

\begin{cor}
Suppose $G$ is a dp-minimal definably amenable group. Then there
are boundedly many $G$-invariant Keisler measures. 
\end{cor}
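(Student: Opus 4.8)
The plan is to reduce the count of $G$-invariant Keisler measures to the (already bounded) count of measures invariant over a single small model, exploiting Corollaries~\ref{cor:f-generic implies strongly f-generic} and~\ref{cor:boundedly many types}.

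First I would show that every $G$-invariant Keisler measure $\mu$ is concentrated on the set $F\subseteq S_{G}\left(\C\right)$ of $f$-generic types; here I use the standard identification of a Keisler measure with a regular Borel probability measure on the totally disconnected compact space $S_{G}\left(\C\right)$. If $\varphi\left(x,c\right)$ had positive measure but $G$-divided, say via an indiscernible $\sequence{g_{i}}{i<\omega}$ with $\set{g_{i}X}{i<\omega}$ $k$-inconsistent for $X=\varphi\left(\C,c\right)$, then $G$-invariance gives $\mu\left(g_{i}X\right)=\mu\left(X\right)$ for all $i$, while $k$-inconsistency forces $\sum_{i<N}\mu\left(g_{i}X\right)\leq k-1$ for every $N$ (each type lies in at most $k-1$ of the $g_{i}X$); this is impossible for large $N$. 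So every formula of positive measure is $f$-generic. Since the $G$-dividing formulas form an ideal (Fact~\ref{fact:ideal of G-dividing}), $F$ is closed and its complement is covered by measure-zero basic open sets, so regularity yields $\mu\left(F\right)=1$.

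Next I would pin down a single small model $M^{*}$ making the situation uniform. By Corollary~\ref{cor:boundedly many types} there are only boundedly many $f$-generic types, and by Corollary~\ref{cor:f-generic implies strongly f-generic} each is strongly $f$-generic, hence (being non-forking over some small model, and using NIP) invariant over some small model. Letting $M^{*}$ be a small model containing all of these boundedly many models at once, every $f$-generic type is $M^{*}$-invariant, so every point of $F$ is fixed by $\Aut\left(\C/M^{*}\right)$. A short computation then shows that any measure concentrated on $F$ is $\Aut\left(\C/M^{*}\right)$-invariant: for $\sigma\in\Aut\left(\C/M^{*}\right)$ and any Borel $B$ we have $\sigma^{-1}B\cap F=B\cap F$ since $\sigma$ fixes $F$ pointwise, whence $\sigma_{*}\mu\left(B\right)=\mu\left(\sigma^{-1}B\right)=\mu\left(B\right)$.

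Finally I would invoke the routine bound on invariant measures: an $M^{*}$-invariant Keisler measure assigns to each definable set $\varphi\left(x,b\right)$ a value depending only on $\tp\left(b/M^{*}\right)$, so it is determined by a function from (formula, type over $M^{*}$)-pairs into $\left[0,1\right]$, and there are only boundedly many such functions. Combined with the previous step---every $G$-invariant measure is $M^{*}$-invariant---this bounds the number of $G$-invariant Keisler measures. The part that needs the most care is the concentration step, namely passing from ``every $G$-dividing formula has measure zero'' to $\mu\left(F\right)=1$; this is where one must use the regularity of the Borel measure associated to $\mu$ rather than mere finite additivity. Everything else is cardinal bookkeeping.
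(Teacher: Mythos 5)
Your proof is correct, but it takes a genuinely different route from the paper's. The paper argues by contradiction: assuming unboundedly many $G$-invariant measures, it applies Erd\H{o}s--Rado to extract a formula $\varphi$, a type $p\in S_{y}\left(M\right)$ and realizations $a,b\models p$ with $\mu_{1}\left(\varphi\left(x,a\right)\mathrela{\triangle}\varphi\left(x,b\right)\right)\neq0$; positive measure under a $G$-invariant measure means the symmetric difference does not $G$-divide, while $a\equiv_{M}b$ forces it to fork over $M$ by NIP, contradicting Theorem \ref{thm:bddly many f-generics}. You instead prove a stronger structural statement directly: every $G$-invariant Keisler measure concentrates on the closed set $F$ of $f$-generic types, hence is $\Aut\left(\C/M^{*}\right)$-invariant, and then you count invariant measures by cardinal bookkeeping. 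Both arguments run on the same two engines --- positive-measure sets do not $G$-divide (your $k$-inconsistency computation is exactly the standard one, and only needs finite additivity), and the corollaries of Theorem \ref{thm:bddly many f-generics} making $f$-generic types invariant over small models --- but they pay different costs: the paper's version never leaves the finitely additive setting and needs no Borel machinery, at the price of the Erd\H{o}s--Rado extraction and an indirect structure, whereas yours requires the regular Borel extension of a Keisler measure (you correctly flag the concentration step as the delicate point: each $G$-dividing clopen set has measure zero by finite additivity, and inner regularity by compact subsets handles the possibly unbounded union) and in exchange delivers the sharper conclusion that $G$-invariant measures are automorphism-invariant over a small model, which is of independent interest. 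One simplification: your detour through a union of boundedly many base models is unnecessary --- the proof of Corollary \ref{cor:f-generic implies strongly f-generic} shows that every $f$-generic type is non-forking, hence by NIP invariant, over \emph{any} fixed small model $M$, so you may take $M^{*}=M$ from the start.
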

\begin{proof}
Suppose that there are unboundedly many such measures $\mu_{i}$.
Fix some small model $M$. By Erd\"os-Rado, we may find a formula
$\varphi\left(x,y\right)$, a type $p\in S_{y}\left(M\right)$ some
numbers $\alpha\neq\beta\in\left[0,1\right]$ and a sequence of $G$-invariant
Keisler measures $\sequence{\mu_{i}}{i<\omega}$ such that for all
$i<j<\omega$, $\mu_{i}\left(\varphi\left(x,a_{i,j}\right)\right)=\alpha$
and $\mu_{j}\left(\varphi\left(x,a_{i,j}\right)\right)=\beta$ for
some $a_{i,j}\models p$ in $\C$. 

Then there are $a,b\models p$ such that $\alpha=\mu_{0}\left(\varphi\left(x,a\right)\right)\neq\mu_{1}\left(\varphi\left(x,a\right)\right)=\beta$
and $\alpha=\mu_{1}\left(\varphi\left(x,b\right)\right)\neq\mu_{2}\left(\varphi\left(x,b\right)\right)=\beta$.
In particular $\mu_{1}\left(\varphi\left(x,a\right)\mathrela{\triangle}\varphi\left(x,b\right)\right)\neq0$.
As $\mu_{1}$ is $G$-invariant, $\varphi\left(x,a\right)\mathrela{\triangle}\varphi\left(x,b\right)$
does not $G$-divide (see \cite[Theorem 3.38]{ChernikovSimonDA2015},
but this follows easily from the definitions), but it forks by NIP. 
\end{proof}
\begin{cor}
Suppose that $\left(F,+,\cdot\right)$ is a dp-minimal field. Then
every additive $f$-generic set (i.e., with respect to $\left(F,+,0\right)$)
is also multiplicatively $f$-generic. 
\end{cor}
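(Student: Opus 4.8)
The plan is to exploit the fact that Theorem~\ref{thm:bddly many f-generics} links the purely model\hyp{}theoretic notion of forking with the group\hyp{}dependent notion of $G$-dividing, and to apply it simultaneously to the additive group $(F,+)$ and the multiplicative group $(F^{\times},\cdot)$. Both are abelian, hence amenable and so definably amenable, and both are dp-minimal: their domains consist of field elements, so the dp-minimality of $F$ suffices to run the theorem for either group. Throughout, an $f$-generic set means a definable set that does not $G$-divide in the relevant group, while forking is computed in the ambient field theory and is therefore the same notion regardless of which group structure we keep in mind.

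Let $X$ be an additively $f$-generic definable set; I want to show it is multiplicatively $f$-generic. If $0\in X$, then every multiplicative translate $aX$ contains $0$, so no finite subfamily of $\set{aX}{a\in F^{\times}}$ is inconsistent and $X$ trivially fails to multiplicatively $G$-divide. Hence I may assume $0\notin X$, i.e.\ $X\subseteq F^{\times}$, and argue by contrapositive: from a multiplicative $G$-dividing of $X$ I will manufacture an additive one, contradicting that $X$ is additively $f$-generic.

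The engine is as follows. If $X$ multiplicatively $G$-divides, then by definition there is a sequence $\sequence{a_{i}}{i<\omega}$ from $F^{\times}$, indiscernible over the parameters of $X$, with $\set{a_{i}X}{i<\omega}$ inconsistent; passing to a small model $M$ containing the parameters of $X$ over which this sequence is indiscernible (by Ramsey), the single translate $a_{0}X$ divides, hence forks, over $M$, directly from the definition of dividing (cf.\ \cite[Proposition 3.4]{ChernikovSimonDA2015}). Now I apply Theorem~\ref{thm:bddly many f-generics} to the \emph{additive} group: since $a_{0}X$ forks over $M$, it additively $G$-divides. The final move is the key structural observation that multiplication by a nonzero scalar is a definable automorphism of $(F,+)$: the map $x\mapsto a_{0}^{-1}x$ is additive, it carries $a_{0}X$ to $X$, and applying it termwise to the additive translates witnessing the $G$-dividing of $a_{0}X$ and to the indiscernible sequence producing them yields an additive $G$-dividing of $X$ (inconsistency is preserved because the map is a bijection, and indiscernibility over the parameters of $X$ is preserved because the map is definable over $a_{0}$). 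This is the desired contradiction.

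The step I expect to carry the weight is the asymmetry that makes the statement hold in exactly one direction. It is crucial that multiplication by $a_{0}$ is an \emph{additive} automorphism, which is precisely what lets me transport a $G$-dividing from the translate $a_{0}X$ back to $X$; the reverse implication is false because additive translation $x\mapsto x+b$ is not a multiplicative automorphism, so the analogous trick cannot push a multiplicative $G$-dividing back along an additive translate. Apart from this conceptual point, the only care needed is routine bookkeeping: arranging the witnessing sequence to be indiscernible over a small model so that Theorem~\ref{thm:bddly many f-generics} is applicable, and verifying that the scalar automorphism respects both the inconsistency of the translates and the indiscernibility of the sequence over the parameters of $X$.
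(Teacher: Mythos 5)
Your proof is correct, but it takes a genuinely different route from the paper's, even though both pivot on the same structural fact: multiplication by a nonzero scalar is a definable automorphism of $\left(F,+\right)$, so additive $f$-genericity and additive $G$-dividing are transported along multiplicative translation. The paper argues by soft counting at the level of global types: if $\left\{ a_{i}X\right\} $ were inconsistent, then each translate $a_{i}X$ is still additively $f$-generic, so after stretching the sequence to arbitrary length each translate extends to a global additively $f$-generic type (this uses the ideal property, Fact \ref{fact:ideal of G-dividing}), and the $k$-inconsistency forces unboundedly many distinct such types, contradicting Corollary \ref{cor:boundedly many types}. You instead invoke Theorem \ref{thm:bddly many f-generics} directly and locally: the multiplicative witnesses show that a single translate $aX$ divides, hence forks, over a small model, so by the theorem applied to $\left(F,+\right)$ it additively $G$-divides, and the scalar map $x\mapsto a^{-1}x$ carries that dividing configuration back to $X$, contradicting additive $f$-genericity of $X$. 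Your version avoids the passage to global types and the boundedness count altogether (indeed Corollary \ref{cor:boundedly many types} is itself deduced from the theorem you use, so you are working one level lower), at the cost of the Ramsey bookkeeping needed to make the witness sequence indiscernible over a model; note that after extraction the dividing translate is $a_{0}'X$ for the first element of the extracted sequence rather than the original $a_{0}X$, which is harmless since your transport step works from any single translate. Your dismissal of the case $0\in X$ is a reasonable reading (the paper's definition of $G$-dividing literally requires $X\subseteq F^{\times}$ and its proof does not address this point), and your closing remark correctly isolates the asymmetry that makes the argument run in only one direction.
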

\begin{proof}
Note that any abelian group is definably amenable (see \cite[Example 8.13]{pierrebook}).

Suppose that $X$ is additively $f$-generic. For any $a\in F^{\times}$,
$a\cdot X$ is also additively $f$-generic. Hence, if $X$ is not
multiplicatively $f$-generic, then there is an indiscernible sequence
$\sequence{a_{i}}{i<\omega}$ over the parameters defining $X$ such
that $\set{a_{i}X}{i<\omega}$ is inconsistent (so $k$-inconsistent
for some $k<\omega$). Increasing the sequence to any length, by Fact
\ref{fact:ideal of G-dividing}, we get unboundedly many additively
$f$-generic global types. Contradicting Corollary \ref{cor:boundedly many types}.
\end{proof}
\begin{problem}
Is there a dp-minimal group which is not definably amenable?
\end{problem}
\bibliographystyle{alpha}
\bibliography{common2}

\end{document}